\newcommand{\noun}[1]{\textsc{#1}}
\numberwithin{equation}{section}
\numberwithin{figure}{section}
  \theoremstyle{plain}
  \newtheorem*{thm*}{\protect\theoremname}
  \theoremstyle{plain}
  \newtheorem*{prop*}{\protect\propositionname}
\theoremstyle{plain}
\newtheorem{thm}{\protect\theoremname}
  \theoremstyle{plain}
  \newtheorem{prop}[thm]{\protect\propositionname}
  \theoremstyle{definition}
  \newtheorem{defn}[thm]{\protect\definitionname}
  \theoremstyle{remark}
  \newtheorem{rem}[thm]{\protect\remarkname}
  \theoremstyle{definition}
  \newtheorem{example}[thm]{\protect\examplename}
  \theoremstyle{plain}
  \newtheorem{cor}[thm]{\protect\corollaryname}
  \theoremstyle{remark}
  \newtheorem{acknowledgement}[thm]{\protect\acknowledgementname}
  \providecommand{\acknowledgementname}{Acknowledgement}
  \providecommand{\corollaryname}{Corollary}
  \providecommand{\definitionname}{Definition}
  \providecommand{\examplename}{Example}
  \providecommand{\propositionname}{Proposition}
  \providecommand{\remarkname}{Remark}
  \providecommand{\theoremname}{Theorem}
\providecommand{\theoremname}{Theorem}
\begin{document}

\title{an extension of the functional ito formula under a family of non--dominated
measures}
\begin{abstract}
Motivated by questions arising in financial mathematics, Dupire \cite{Duprire_FIto}
introduced a notion of smoothness for functionals of paths (different
from the usual Fréchet--Gatéaux derivatives) and arrived at a generalization
of It\={o}'s formula applicable to functionals which have a pathwise
continuous dependence on the trajectories of the underlying process.
We study nonlinear functionals which do not have such pathwise continuity
and further work simultaneously under the family of continuous semimartingale
measures on path-space. We do this without introducing a second component,
as carried out by Cont--Fournie \cite{ContFournie,ContFournie_Pathwise}
but by using old work of Bichteler \cite{bichteler1981stochastic}
which allows to keep a pathwise picture even for complex functionals.

\end{abstract}

\author{Harald Oberhauser}

\address{TU Berlin, Institut für Mathematik, Straße des 17.~Juni 136, 10623
Berlin}

\keywords{Semimartingale decomposition, pathwise stochastic Integration, functional
Ito formula}

\email{h.oberhauser@gmail.com}

\maketitle

\section{Introduction}

It is an understatement to say that It\={o}'s stochastic calculus
is a useful tool for the modeling of systems that evolve under the
influence of randomness. An essential part of this calculus is It\={o}'s
formula which, given a semimartingale $X=\left(X_{t}\right)_{t\geq0}$
and a sufficiently smooth function $f$, shows that the stochastic
process $f\left(t,X_{t}\right)$ is also a semimartingale and moreover,
provides an explicit (Bichteler--Dellacherie) decomposition of the
process $f\left(t,X_{t}\right)$ into a sum of a stochastic integral
against $X$ and a process of finite variation. Of course, the class
of processes of the form $f\left(t,X_{t}\right)$ is only a small
subset of the stochastic processes that are adapted to the filtration
generated by $X$, $\sigma\left(X\right)$, and often it is necessary
to derive similar statements for this larger class of processes. One
area where such questions arise is financial engineering: traders
aim to understand the dynamics of the price process of a (highly path-dependent)
contingent claim with respect to the underlying asset modeled by the
stochastic process $X$. Motivated by these questions, Dupire \cite{Duprire_FIto}
showed that if $X$ is a standard real-valued Brownian motion, it
is possible to extend It\={o}'s formula to a nontrivial subset in
the class of real-valued processes $F=\left(F_{t}\right)_{t\geq0}$
which are adapted to $\sigma\left(X\right)$ and gave the formula
\begin{equation}
F_{t}-F_{0}=\int_{0}^{t}\left(\partial_{0}F_{r}+\frac{1}{2}\Delta F_{r}\right)dr+\int_{0}^{t}\nabla F_{r}dX\label{eq:dito}
\end{equation}
where $\partial_{0}F$,$\nabla F$ and $\Delta F$ are again $\sigma\left(X\right)$-adapted
processes to which we refer as the causal derivatives of $F$ (following
M.~Fliess%
\footnote{We use the term causal since essentially the same operators can be
found in the literature on nonlinear system control (i.e.~deterministic,
bounded variation paths), c.f.~\cite[Section IIb]{fliess1983concept},
where they are known as causal derivatives.%
}; in \cite{Duprire_FIto,ContFournie,ContFournie_Pathwise} these are
called functional time and space derivatives or also horizontal and
vertical derivatives). For the special case $F_{t}=f\left(t,X_{t}\right)$,
these causal derivatives coincide with the usual time and space derivatives
of $f$ and the above reduces to the standard It\={o}-formula applied
to $f\left(t,X_{t}\right)$. However, the class of processes considered
by Dupire is limited to those that depend continuously%
\footnote{in (not precisely but) essentially the uniform topology, cf.~Example
\ref{ex: continuous proc} and \cite{ContFournie}; in fact, much
of this article is concerned with avoiding the use of this metric.%
} on the trajectories of $X$ (e.g.~$F_{.}=\int_{0}^{.}f\left(r,X_{r}\right)dr$
but not $F_{.}=\int_{0}^{.}f\left(r,X_{r},\left[X\right]_{r}\right)dr$
etc.). While such a path-by-path continuity is of course guaranteed
by differentiability in the case of the classic Ito-formula, $F_{t}=f\left(t,X_{t}\right)$,
it is not implied for general, $\sigma\left(X\right)$-adapted processes
$F$ by causal differentiability; indeed, such a pathwise continuity
would be a very strong restriction in the class of $\sigma\left(X\right)$-adapted
processes. This restriction was subsequently addressed by Cont--Fournie
who consider general semimartingales $X$ and add a second process
(e.g.~$\left[X\right]$ or its weak derivative; cf.~\cite{ContFournie,ContFournie_Pathwise}
and the excellent thesis of D.~Fournie) to express $F$ as a functional
which depends pathwise continuously on the trajectories of $X$ \emph{and}
this second process which allows them to arrive at a generalization
of Dupire's functional It\={o}-formula to a larger class of $\sigma\left(X\right)$-adapted
processes. This article is inspired by all these strong and beautiful
results and shares the same goal of extending the class of functionals
of $X$ for which (\ref{eq:dito}) holds, however, we explore a different
approach. We do not introduce a second process in addition to the
continuous semimartingale $X$ but nevertheless arrive at a generalisation
of (\ref{eq:dito}) to a large class of functionals which includes
adapted processes with a complex dependence on the past like the quadratic
variation of $X$, stochastic integrals, Doléans--Dade exponentials,
compositions thereof, etc. To do this we have to overcome some obstacles: 
\begin{itemize}
\item firstly, the pathwise nature of the causal derivatives involves operations
on sets of paths which are null-sets,
\item secondly, many interesting processes are constructed by a probabilistic
argument, hence are often only uniquely defined as equivalence classes
of indistinguishable processes but not pathwise unique,
\item thirdly, that many of these processes are not robust under approximations
of $X$ in uniform norm.
\end{itemize}
Especially note the clash of the first point (pathwise considerations
matter) and the second point (only equivalence classes under a fixed
probability measure matter for many applications in stochastic analysis).
We briefly elaborate on this and sketch our approach: Throughout we
work on the canonical path-space $\left(\Omega,\mathcal{F}^{0},\left(\mathcal{F}_{t}^{0}\right)\right)$
of $\mathbb{R}^{d}$-valued (possibly defective, cf.~Section \ref{sub:Notation.})
càdlàg processes with $X_{t}\left(\omega\right)=\omega\left(t\right)$
denoting the coordinate process. In Section \ref{sec:Causal-derivatives-of}
we recall Dupire's causal derivatives. They are a family of maps $\mathbb{R}_{+}\times\Omega\rightarrow\mathbb{R}$
which describe the sensitivity of a given $F:\mathbb{R}_{+}\times\Omega\rightarrow\mathbb{R}$
with respect to perturbations of the coordinate process $X$ at running
time. If we denote with $\mathcal{M}_{c}^{semi}$ the probability
measures under which the coordinate process $X$ is a semimartingale
with (\noun{$\mathbb{P}$}-a\noun{.}s) continuous trajectories then
we can consider the standard completion $\left(\Omega,\mathcal{F}^{\mathbb{P}},\mathcal{F}_{t}^{\mathbb{P}},\mathbb{P}\right)$
of $\left(\Omega,\mathcal{F}^{0},\mathcal{F}_{t}^{0}\right)$ (the
reason why we work on the space of càdlàg paths $\Omega$ is the definition
of the causal space derivative). Now assume we are given an adapted,
real-valued process $F$ on $\left(\Omega,\mathcal{F}^{\mathbb{P}},\mathcal{F}_{t}^{\mathbb{P}},\mathbb{P}\right)$,
\emph{$\mathbb{P}\in\mathcal{M}_{c}^{semi}$} then even if we have
a version of this $F$ at hand which has causal derivatives then it
is, as pointed out above, not always justified to make strong assumptions
on the pathwise regularity of the map $\left(t,\omega\right)\mapsto F_{t}\left(\omega\right)$
(already for fixed $t$; e.g.~if $F$ is any version of the stochastic
integral, quadratic variation etc.). Therefore we introduce in Section
\ref{sec:regular-and-differentiable} the class of functionals $C^{1,2}$,
i.e.~a subset of the maps $\mathbb{R}_{+}\times\Omega\rightarrow\mathbb{R}$,
and show in Section \ref{sec:An-ito-Formula} that the functional
It\={o} formula extends to $C^{1,2}$; Theorem \vref{thm:pathdependent ito}
reads
\begin{thm*}
Let $F:\mathbb{R}_{+}\times\Omega\rightarrow\mathbb{R}$, $F\in C^{1,2}$.
Then $\forall\mathbb{P}\in\mathcal{M}_{c}^{semi}$, $F$ is a continuous
semimartingale on $\left(\Omega,\mathcal{F}^{\mathbb{P}},\mathcal{F}_{t}^{\mathbb{P}},\mathbb{P}\right)$
and 
\[
F_{.}-F_{0}=\int_{0}^{.}\partial_{0}F_{r}dr+\sum_{i=1}^{d}\int_{0}^{.}\partial_{i}F_{r-}dX_{r}^{i}+\frac{1}{2}\sum_{i,j=1}^{d}\int_{0}^{.}\partial_{ij}F_{r-}d\left[X^{i},X^{j}\right]_{r}\,\,\,\,\mathbb{P}-a.s.
\]

\end{thm*}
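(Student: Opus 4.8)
The plan is to fix an arbitrary $\mathbb{P}\in\mathcal{M}_{c}^{semi}$, write the Bichteler--Dellacherie decomposition $X=X_{0}+M+A$ of the coordinate process under $\mathbb{P}$, and derive the identity by the classical ``partition and Taylor--expand'' scheme, but carried out so that every term keeps a pathwise meaning via Bichteler's pathwise integral \cite{bichteler1981stochastic}. First I would localize: stopping at exit times of large balls and at the first times where $[X]$, the total variation of $A$, or a suitable modulus of the derivatives exceed a given level, it suffices to establish the formula with $F$, $\partial_{0}F$, $\partial_{i}F$, $\partial_{ij}F$ and the characteristics of $X$ all bounded, which lets me use dominated convergence for Lebesgue and for stochastic integrals. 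Then I fix $t$, pick partitions $0=t_{0}^{n}<\dots<t_{k_{n}}^{n}=t$ with mesh tending to $0$, and write the telescoping sum $F_{t}-F_{0}=\sum_{k}\bigl(F_{t_{k+1}}-F_{t_{k}}\bigr)$.

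The heart of the argument is to split each increment into a horizontal and a vertical part. In the horizontal step one freezes the path $\omega$ after $t_{k}$ at the value $\omega(t_{k})$ and lets only time run from $t_{k}$ to $t_{k+1}$; by the definition of $\partial_{0}F$ and a one--dimensional fundamental theorem of calculus this contributes $\int_{t_{k}}^{t_{k+1}}\partial_{0}F_{r}\,dr$ plus a remainder. In the vertical step one moves the endpoint of the frozen path from $\omega(t_{k})$ to $\omega(t_{k+1})$ with time held at $t_{k+1}$; this is a genuine finite--dimensional increment $g(\omega(t_{k+1}))-g(\omega(t_{k}))$ for the $C^{2}$ map $y\mapsto g(y):=F_{t_{k+1}}$ evaluated at the path frozen at $t_{k}$ and bumped to $y$, whose first and second derivatives are $\partial_{i}F$ and $\partial_{ij}F$ by the definitions of Section~\ref{sec:regular-and-differentiable}, so the classical Itô formula applies. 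Summing the classical Itô expansions over $k$ yields $\sum_{i}\int_{0}^{t}\partial_{i}F\,dX^{i}+\frac{1}{2}\sum_{ij}\int_{0}^{t}\partial_{ij}F\,d[X^{i},X^{j}]$ up to Riemann--sum errors; these I control using the continuity (at each fixed time, and jointly in a neighbourhood) of $(t,\omega)\mapsto\partial_{i}F_{t}(\omega),\partial_{ij}F_{t}(\omega)$ built into $C^{1,2}$ together with the $\mathbb{P}$-a.s.\ continuity of $X$, so that the frozen evaluations converge to $\partial_{i}F_{r-}$, $\partial_{ij}F_{r-}$ and the integral Riemann sums converge in $\mathbb{P}$-probability. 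Bichteler's construction supplies a version of these integrals independent of $\mathbb{P}$, which is why the left limits survive even though under any fixed $\mathbb{P}\in\mathcal{M}_{c}^{semi}$ one only sees continuous trajectories.

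I expect the main obstacle to be precisely this limit passage in the absence of pathwise continuity of $\omega\mapsto F_{t}(\omega)$: unlike in Dupire's or Cont--Fournie's framework, I cannot bound $F_{t_{k+1}}(\omega)-F_{t_{k+1}}(\omega\text{ frozen at }t_{k})$ by a uniform--norm modulus of continuity of $F$. Instead I must extract from the definition of $C^{1,2}$ the exact regularity that does hold --- fixed--time continuity of $F$ and its derivatives, plus second--order vertical differentiability with a modulus uniform over a neighbourhood --- and show it forces the vertical remainder of each step to be $o\bigl(|\omega(t_{k+1})-\omega(t_{k})|^{2}\bigr)$ summably in $k$ and the horizontal remainder to be $o(t_{k+1}-t_{k})$, so that both remainder sums vanish in $\mathbb{P}$-probability along the partitions. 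A secondary technical point is to check that all frozen and bumped paths stay in the domains of $F$ and its causal derivatives and that the auxiliary maps $g$ are genuinely $C^{2}$ with the claimed derivatives; this is where the definitions of Section~\ref{sec:regular-and-differentiable} carry the load. Once the remainders are shown to disappear, collecting terms gives the identity $\mathbb{P}$-a.s.; since $\mathbb{P}$ was arbitrary in $\mathcal{M}_{c}^{semi}$ and the right--hand side is visibly a continuous semimartingale, the theorem follows.
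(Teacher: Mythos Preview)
Your telescoping decomposition does not close, and this is not a secondary technicality but the central obstacle the paper's definition of $C^{1,2}$ is designed to circumvent. When you split $F_{t_{k+1}}(\omega)-F_{t_{k}}(\omega)$ into a horizontal part $F_{t_{k+1}}(\omega_{\wedge t_{k}})-F_{t_{k}}(\omega_{\wedge t_{k}})$ and a vertical part obtained by bumping the frozen path at time $t_{k+1}$ from $\omega(t_{k})$ to $\omega(t_{k+1})$, the endpoint of your vertical move is the path $(\omega_{\wedge t_{k}})^{t_{k+1},\,\omega(t_{k+1})-\omega(t_{k})}$, which is \emph{not} $\omega_{\wedge t_{k+1}}$: the latter carries the whole continuous trajectory of $\omega$ on $[t_{k},t_{k+1}]$, the former is constant there with a jump at $t_{k+1}$. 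So your two pieces do not add up to $F_{t_{k+1}}(\omega)-F_{t_{k}}(\omega)$; there is a leftover term $F_{t_{k+1}}(\omega)-F_{t_{k+1}}\bigl((\omega_{\wedge t_{k}})^{t_{k+1},\,\omega(t_{k+1})-\omega(t_{k})}\bigr)$ in every summand. Controlling this leftover would require exactly the pathwise continuity in $\omega$ that the paper explicitly does \emph{not} assume, and the properties you invoke (``fixed--time continuity of $F$'', ``second--order vertical differentiability with a uniform modulus'') are not part of the definition of $C^{1,2}$.

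The paper's proof sidesteps this by never telescoping $F(\omega)$ directly. Instead one replaces $\omega$ by its piecewise constant approximation $\omega^{n}=\omega^{\pi(n)}$ along the partition and telescopes $F_{t^{n}}(\omega^{n})-F_{0}(\omega^{n})$. On $\omega^{n}$ the key identity $\omega^{n}_{\wedge t_{k}^{n}}=(\omega^{n}_{\wedge t_{k-1}^{n}})^{t_{k}^{n},\,\omega_{t_{k-1}^{n},t_{k}^{n}}}$ holds exactly, so the horizontal/vertical split is genuinely exhaustive and the vertical step is an honest finite--dimensional increment to which Taylor applies. The causal continuity in space and time of $\nabla F,\Delta F$ handles the Taylor remainder and the time--shift in the integrand; the passage from $F(\omega^{n})$ back to $F(\omega)$ is then furnished by the \emph{regularity} condition in Definition~\ref{def:regular, causal} (ucp convergence of $F_{-}(\omega^{n})$ and of $\partial_{0}F,\nabla F,\Delta F$ evaluated at $\omega^{n}$), which is the actual content of ``$F\in C^{1,2}$'' that you have not used. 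Neither a localisation step, nor the Bichteler--Dellacherie decomposition of $X$, nor Bichteler's pathwise integral enters the proof of the formula itself; the last of these is used only later, in Section~\ref{sec:A-regular-and}, to exhibit concrete elements of $C^{1,2}$.
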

Of course, it remains to show that an interesting class of processes
can be expressed as a functional in $C^{1,2}$: for the class of processes
studied by Dupire (Example \vref{ex: continuous proc}) this follows
immediately. In Section \ref{sec:A-regular-and} we do exactly this
for the stochastic integral $\int Y_{-}dX$ and the quadratic variation
process $\left[X^{i},X^{j}\right]$ using the pathwise It\={o}-integral
due to Bichteler and Karandikar, for example we show in Section \ref{sec:A-regular-and}
\begin{prop*}
For every $i,j\in\left\{ 1,\ldots,d\right\} $ there exists a map
$B^{ij}:\mathbb{R}_{+}\times\Omega\rightarrow\mathbb{R}$ such that
$B^{ij}\in C^{1,2}$ and $B^{ij}$ is on every $\left(\Omega,\mathcal{F}^{\mathbb{P}},\mathcal{F}_{t}^{\mathbb{P}},\mathbb{P}\right)$,
$\mathbb{P\in\mathcal{M}}_{c}^{semi}$, an adapted process, indistinguishable
from the quadratic variation process $\left[X^{i},X^{j}\right]$ (constructed
under $\mathbb{P}$).
\end{prop*}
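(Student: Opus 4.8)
The plan is to obtain $B^{ij}$ by integration by parts, reducing the assertion to the companion result of this section for the pathwise stochastic integral $\int Y_{-}\,dX$. Let $I^{ij}$ denote the functional in $C^{1,2}$ built from the Bichteler--Karandikar integral $\int X^{i}_{-}\,dX^{j}$, which on each $\left(\Omega,\mathcal{F}^{\mathbb{P}},\mathcal{F}^{\mathbb{P}}_{t},\mathbb{P}\right)$, $\mathbb{P}\in\mathcal{M}^{semi}_{c}$, is indistinguishable from the It\={o} integral of $X^{i}$ against $X^{j}$. I would then define, for every $\omega\in\Omega$ and $t\geq 0$,
\[
B^{ij}_{t}(\omega):=\omega^{i}(t)\,\omega^{j}(t)-\omega^{i}(0)\,\omega^{j}(0)-I^{ij}_{t}(\omega)-I^{ji}_{t}(\omega).
\]
Since $I^{ij}$ and $I^{ji}$ are already defined for every c\`adl\`ag path, so is $B^{ij}$, and the delicate issue of pathwise definiteness away from null sets has been settled at the level of the stochastic integral and need not be revisited. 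Two things remain to check: (a) $B^{ij}\in C^{1,2}$, and (b) $B^{ij}$ is $\mathbb{P}$-indistinguishable from $\left[X^{i},X^{j}\right]$ for every $\mathbb{P}\in\mathcal{M}^{semi}_{c}$.

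For (a) I would use that $C^{1,2}$ is stable under finite linear combinations --- immediate from the definition in Section~\ref{sec:regular-and-differentiable}, the causal derivatives being defined by limits and the regularity requirements plainly preserved under sums --- together with: (i) $I^{ij},I^{ji}\in C^{1,2}$ by the companion proposition, with causal derivatives $\partial_{0}I^{ij}=0$, $\partial_{k}I^{ij}=\delta_{kj}X^{i}$ (evaluated at left limits) and $\partial_{k\ell}I^{ij}=0$, these being precisely the derivatives that make Theorem~\ref{thm:pathdependent ito} reproduce $\int X^{i}_{-}\,dX^{j}$; and (ii) the functional $\Pi^{ij}_{t}(\omega):=\omega^{i}(t)\omega^{j}(t)-\omega^{i}(0)\omega^{j}(0)$, a smooth function of the current state modulo an $\mathcal{F}_{0}$-measurable constant and thus of the type covered by Example~\ref{ex: continuous proc}, with $\partial_{0}\Pi^{ij}=0$, $\partial_{k}\Pi^{ij}_{t}=\delta_{ki}X^{j}_{t}+\delta_{kj}X^{i}_{t}$ and $\partial_{k\ell}\Pi^{ij}=\delta_{ki}\delta_{\ell j}+\delta_{kj}\delta_{\ell i}$. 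By linearity one then gets $\partial_{0}B^{ij}=0$, the first--order space derivative $\partial_{k}B^{ij}$ vanishes (on continuous trajectories the $\Pi^{ij}$ and $I^{ij}+I^{ji}$ contributions cancel, and its left--limited version vanishes identically), and $\partial_{k\ell}B^{ij}=\delta_{ki}\delta_{\ell j}+\delta_{kj}\delta_{\ell i}$; as these are constant and adapted, the regularity demanded of a $C^{1,2}$ functional is immediate.

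For (b), fix $\mathbb{P}\in\mathcal{M}^{semi}_{c}$. Classical integration by parts for the continuous semimartingales $X^{i},X^{j}$ on $\left(\Omega,\mathcal{F}^{\mathbb{P}},\mathcal{F}^{\mathbb{P}}_{t},\mathbb{P}\right)$ gives
\[
X^{i}_{t}X^{j}_{t}-X^{i}_{0}X^{j}_{0}=\int_{0}^{t}X^{i}_{r-}\,dX^{j}_{r}+\int_{0}^{t}X^{j}_{r-}\,dX^{i}_{r}+\left[X^{i},X^{j}\right]_{t}\qquad\mathbb{P}\text{-a.s.},
\]
and combining this with the indistinguishability of $I^{ij},I^{ji}$ from the respective It\={o} integrals shows $B^{ij}$ is $\mathbb{P}$-indistinguishable from $\left[X^{i},X^{j}\right]$. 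As an internal consistency check one may instead apply Theorem~\ref{thm:pathdependent ito} to $B^{ij}\in C^{1,2}$: the $dr$ and $dX^{k}$ terms vanish, the bracket term equals $\tfrac{1}{2}\bigl(\left[X^{i},X^{j}\right]+\left[X^{j},X^{i}\right]\bigr)=\left[X^{i},X^{j}\right]$, and $B^{ij}_{0}=0$, yielding the same identification.

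The genuine difficulty lies upstream, in the construction of $I^{ij}$, not in the present statement: the issue there is not the one-measure-at-a-time probabilistic content (classical) but that the Bichteler--Karandikar integral must be realised as a \emph{single}, everywhere-defined map on path space that simultaneously represents all the It\={o} integrals and still possesses causal derivatives. The awkward point is that the causal space derivative perturbs $\omega$ by $h\mathbf{1}_{[t,T]}$, which creates a jump at $t$ even when $\omega$ is continuous, so the pathwise limit defining the integral has to be controlled along genuinely c\`adl\`ag perturbed paths, with the contribution of that jump tracked; granting this, the passage to $B^{ij}$ is, as above, linearity plus a classical identity. Should one prefer to bypass the $\int Y_{-}\,dX$ proposition, one could define $B^{ij}$ directly as the pathwise limit of $\sum_{k}\bigl(X^{i}_{\tau^{n}_{k+1}}-X^{i}_{\tau^{n}_{k}}\bigr)\bigl(X^{j}_{\tau^{n}_{k+1}}-X^{j}_{\tau^{n}_{k}}\bigr)$ along the $\mathbb{P}$-independent Bichteler--Karandikar stopping partitions $(\tau^{n}_{k})$, extended by $0$ off the (Borel, $\mathbb{P}$-a.s.\ full) convergence set; the two steps would again be almost-sure identification with $\left[X^{i},X^{j}\right]$ under each $\mathbb{P}\in\mathcal{M}^{semi}_{c}$ and $C^{1,2}$-membership, the space-derivative computation once more turning on how a bump at the right endpoint affects the limiting Riemann sums.
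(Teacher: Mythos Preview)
Your proposal is correct and follows essentially the same route as the paper: define $B^{ij}$ via integration by parts using the pathwise Bichteler--Karandikar integrals $I^{ij},I^{ji}$ from Theorem~\ref{thm:smooth stoch integral}, then read off the $C^{1,2}$ membership and causal derivatives by linearity, and identify $B^{ij}$ with $[X^{i},X^{j}]$ under each $\mathbb{P}\in\mathcal{M}_{c}^{semi}$ via the classical product formula. The only cosmetic differences are that the paper keeps the $X_{0}^{i}X_{0}^{j}$ term (matching the convention $[X^{i},X^{j}]_{0}=X_{0}^{i}X_{0}^{j}$) and handles the cemetery state explicitly by replacing $X$ with $\overline{X}_{t}(\omega)=X_{t}(\omega)1_{\zeta(\omega)=\infty}$ so that the product $\omega^{i}(t)\omega^{j}(t)$ is real-valued for every $\omega\in\Omega$---a point you should address but which does not affect the argument.
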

Note that this is a non--trivial statement linking the map/functional
$B^{ij}$ with the quadratic variation process $\left[X^{i},X^{j}\right]$,
the latter being only uniquely defined as an equivalence class of
indistinguishable processes on $\left(\Omega,\mathcal{F}^{\mathbb{P}},\mathcal{F}_{t}^{\mathbb{P}},\mathbb{P}\right)$
for every fixed \noun{$\mathbb{P}\in\mathcal{M}_{c}^{semi}$}. The
difficulty is to choose a representant in this class which is an element
of $C^{1,2}$ and additionaly also a representant for the quadratic
variation constructed under any other measure in $\mathcal{M}_{c}^{semi}$,
i.e.~to find an aggregator%
\footnote{Given a family $\mathcal{P}$ of probability measures and a family
of processes $\left\{ F^{\mathbb{P}}:\mathbb{P}\in\mathcal{P},F^{\mathbb{P}}\text{ is a meas.\ process on }\left(\Omega,\mathcal{F}^{\mathbb{P}},\mathcal{F}_{t}^{\mathbb{P}}\right)\right\} $
we call $F:\mathbb{R}_{+}\times\Omega\rightarrow\mathbb{R}$ an aggregator
of this family if $F=F^{\mathbb{P}}$ $\mathbb{P}$-a.s\@.~$\forall\mathbb{P}\in\mathcal{P}$
(see \cite{EJP2011-67} for a slightly different definition).%
} which is also differentiable --- if we would restrict ourselves to
a subset $\mathcal{P}\subset\mathcal{M}_{c}^{semi}$ which is dominated
by a single measure, i.e.~all elements in $\mathcal{P}$ are absolutely
continuous wrt to this measure, then at least the existence of the
aggregator would be trivial though not necessarily its differentiability.
A similar result holds for the It\={o}--integral $\int Y_{-}\cdot dX$
and as a consequence of the results in Section \ref{sec:A-regular-and}
we get
\begin{prop*}
Let $\mu,\sigma:\mathbb{R}_{+}\times\Omega\rightarrow\mathbb{R}$
be sufficiently regular (as in Theorem \ref{thm:smooth stoch integral}
resp.~Example \ref{ex:regular ito process}). Then there exists a
map $I:\mathbb{R}_{+}\times\Omega\rightarrow\mathbb{R}$ such that
$I\in C^{1,2}$ and $I$ is on every $\left(\Omega,\mathcal{F}^{\mathbb{P}},\mathcal{F}_{t}^{\mathbb{P}},\mathbb{P}\right)$,$\mathbb{P\in\mathcal{M}}_{c}^{semi}$,
an adapted process, indistinguishable from the process (constructed
under $\mathbb{P}$) 
\begin{equation}
\int_{0}^{.}\mu_{r}dr+\int_{0}^{.}\sigma_{r-}dX_{r}.\label{eq:gIto}
\end{equation}
Moreover, $\partial_{0}F=\mu$, $\left(\partial_{i}F\right)_{i=1,\ldots,d}=\left(\sigma_{-}^{i}\right)_{i=1,\ldots,d}$
and $\left(\partial_{ij}F\right)_{i,j=1,\ldots,d}=0$ $\mathbb{P}$-a.s. 
\end{prop*}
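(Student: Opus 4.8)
The plan is to build $I$ by superposition on top of the two functionals already constructed in Section~\ref{sec:A-regular-and}. From the preceding Proposition we have an aggregator $B^{ij}\in C^{1,2}$ for each quadratic-variation process, and from Theorem~\ref{thm:smooth stoch integral} (applied to the integrand $\sigma$) we have a functional $J\in C^{1,2}$ which is, under every $\mathbb{P}\in\mathcal{M}_c^{semi}$, indistinguishable from $\int_0^{\cdot}\sigma_{r-}dX_r$ and whose causal derivatives are $\partial_0 J=0$, $\partial_i J=\sigma_{-}^{i}$, $\partial_{ij}J=0$ $\mathbb{P}$-a.s. The remaining Lebesgue term is the genuinely easy piece: $\mu$ is assumed regular enough (in the sense of Example~\ref{ex:regular ito process}) that the map $L_t(\omega):=\int_0^t\mu_r(\omega)\,dr$ is a bona fide pathwise integral, is adapted, lies in $C^{1,2}$, and has $\partial_0 L=\mu$, $\partial_i L=0$, $\partial_{ij}L=0$. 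This last claim is essentially the content of Example~\vref{ex: continuous proc}, since $t\mapsto L_t$ depends continuously on the trajectory in Dupire's sense; one just checks the three defining properties of $C^{1,2}$ (the appropriate pathwise continuity/boundedness of the functional and its causal derivatives, and joint measurability) directly from the regularity hypotheses on $\mu$.

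Next I would set $I:=L+J$ and verify that $I$ does the job. Membership $I\in C^{1,2}$ follows because $C^{1,2}$ is a vector space — linearity of the causal derivatives $\partial_0,\partial_i,\partial_{ij}$ is immediate from their definition as difference quotients, and the closure of the class under sums is part of the definition/elementary properties established in Section~\ref{sec:regular-and-differentiable}. Hence $\partial_0 I=\partial_0 L+\partial_0 J=\mu$, $\partial_i I=\sigma_{-}^{i}$, and $\partial_{ij}I=0$ $\mathbb{P}$-a.s., as asserted. For the indistinguishability statement: fix $\mathbb{P}\in\mathcal{M}_c^{semi}$. By construction $L=\int_0^{\cdot}\mu_r\,dr$ $\mathbb{P}$-a.s.\ (the pathwise and the $\mathbb{P}$-integral agree) and $J=\int_0^{\cdot}\sigma_{r-}dX_r$ $\mathbb{P}$-a.s.\ by Theorem~\ref{thm:smooth stoch integral}; since both are continuous (indeed $L$ is, and $J$ is $\mathbb{P}$-a.s.\ continuous because $X$ is), the sum of the $\mathbb{P}$-a.s.\ equalities is an equality of two $\mathbb{P}$-indistinguishable processes, which is exactly the claim that $I$ is indistinguishable from \eqref{eq:gIto} under $\mathbb{P}$. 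Finally, applying Theorem~\vref{thm:pathdependent ito} to $I\in C^{1,2}$ recovers \eqref{eq:gIto} as the Bichteler--Dellacherie decomposition of $I$ and thereby gives an independent confirmation of the computed causal derivatives.

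The only mild subtlety — and the step I would be most careful about — is the verification that $L\in C^{1,2}$ together with the identification of its causal derivatives, since the definition of the causal space derivatives $\partial_i$ and $\partial_{ij}$ involves perturbing the \emph{terminal} value of the path (the reason the paper works on c\`adl\`ag space), and one must check that the Lebesgue integral $\int_0^t\mu_r\,dr$ is genuinely insensitive to such a terminal bump: the perturbed path differs from the original only on $\{t\}$, a Lebesgue-null set, so the integral is unchanged, giving $\partial_i L=\partial_{ij}L=0$; and $\partial_0 L=\mu$ is the fundamental theorem of calculus along the (frozen) path. One also has to confirm the regularity/measurability axioms of $C^{1,2}$ for $L,\partial_0 L,\dots$ under whatever hypotheses Example~\ref{ex:regular ito process} places on $\mu$ and $\sigma$ — this is routine but is where all the "sufficiently regular" assumptions get used. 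Everything else is bookkeeping with the linear structure of $C^{1,2}$ and the results already proved for $B^{ij}$ and for the stochastic integral.
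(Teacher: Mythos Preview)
Your approach is correct and is exactly the paper's: the paper defines (in Example~\ref{ex:regular ito process}) the aggregator as $F_t(\omega)=\int_0^t\mu_r(\omega)\,dr+\sum_i\int_0^t\sigma^i_{r-}(\omega)\,d^{BK}X^i_r(\omega)$, i.e.\ the pathwise Lebesgue integral plus the Bichteler--Karandikar integral from Theorem~\ref{thm:smooth stoch integral}, and reads off the causal derivatives directly. Two small comments: the aggregator $B^{ij}$ plays no role here, so you can drop that sentence; and your appeal to Example~\ref{ex: continuous proc} for $L\in C^{1,2}$ is not quite right in general---under the stated hypotheses $\mu$ is only a \emph{regular} c\`adl\`ag functional, not $d_{\mathrm{Dupire}}$-continuous (e.g.\ $\mu=[X]^{BK}$), so $L$ need not be Dupire-continuous either and its regularity must be checked directly from that of $\mu$ (ucp convergence of $\mu(\omega^n)$ implies ucp convergence of $\int_0^{\cdot}\mu_r(\omega^n)\,dr$), which is precisely the verification you describe in your final paragraph.
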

If we denote with $\mathcal{P}_{ac}$ the subset of $\mathcal{M}_{c}^{semi}$
under which $t\mapsto\left[X\right]_{t}$ is absolutely continuous
wrt to Lebesgue measure then we can sum up the above in the language
of quasi--sure analysis \cite{denis2006theoretical,EJP2011-67}: \emph{$F=\overline{F}$
}$\mathcal{P}_{ac}$--quasi-sure for some $\overline{F}\in C^{1,2}$
iff $F$ is of the form (\ref{eq:gIto}) $\mathcal{P}_{ac}$--quasi--sure%
\footnote{An event holds $\mathcal{P}$-quasi-sure if it holds $\mathbb{P}$-a.s.~$\forall\mathbb{P}\in\mathcal{P}$
where $\mathcal{P}$ is a subset of $\mathcal{M}_{c}^{semi}$ (cf.~\cite{denis2006theoretical,MR2179357FS}).%
} (under suitable assumptions on the coefficients $\mu,\sigma$); further,
the class $C^{1,2}$ contains aggregators with complex pathdependence.
We emphasize that we do not extend the causal derivatives by closure
of operators on equivalence classes of indistinguishable processes
to cover It\={o}--processes which leads to strong results under one
fixed measure $\mathbb{P}$ but develop a pathwise picture applicable
simultaneously to all $\mathbb{P}\in\mathcal{M}_{c}^{semi}$. The
approach to consider a stochastic calculus under a family of non-dominated
probability measures is motivated by recent developments in probability
theory like Denis' and Martini's uncertain volatility model and quasi-sure
analysis via aggregation \cite{EJP2011-67,denis2006theoretical} as
well as Peng's $G$-expectation \cite{peng2010nonlinear,2011arXiv1108.4317P}.
Of course, plugging in above functionals $B$ and $I$ into the functional
It\={o} formula leads to trivial identities but the point is that
compositions with smooth functions or functionals are again in $C^{1,2}$
(e.g.~see the examples in Section \ref{sec:A-regular-and} or \cite{Fourniethesis}
for more applications in finance). Further, the only requirement for
a functional $F:\mathbb{R}_{+}\times\Omega\rightarrow\mathbb{R}$
for being in $C^{1,2}$ is besides its causal differentiability (and
some minor regularity assumptions) the convergence of $F$ (and its
derivatives) under finite--dimensional approximations, $F\left(\omega^{n}\right)\rightarrow_{n}F\left(\omega\right)$
uniformly on compacts in $\mathbb{P}$-probability, $\forall\mathbb{P}\in\mathcal{M}_{c}^{semi}$,
where $\left(\omega^{n}\right)_{n}$ is a piecewise constant approximation
to $\omega$, which --- in view of the usual approximation results
in It\={o}--calculus --- seems to be quite a weak and natural assumption
(recall classic approximations in It\={o}--calculus for say the stochastic
integral or quadratic variation which more or less by construction
hold in probability or even Wong--Zakai type%
\footnote{For obvious reasons, Wong--Zakai results are usually formlated for
the convergence along piecewise linear paths (not piecewise constant)
hence giving a Stratonovich- (not an It\={o}-) calculus.%
} results for highly complex, pathdependent functionals are in principle
included).

\subsection{Notation.\label{sub:Notation.}}

Denote with $\zeta$ an isolated point added to $\mathbb{R}^{d}$
which plays the role of a cemetery and denote with $\mathbb{R}_{+}$
the set $\left[0,\infty\right)$. We work on the canonical path-space
for sub-stochastic (i.e.~possibly defective) càdlàg processes denoted
with $\Omega$, i.e.~$\Omega$ is the set of paths $\omega:\mathbb{R}_{+}\rightarrow\mathbb{R}^{d}\cup\left\{ \zeta\right\} $
with lifetime
\[
\zeta\left(\omega\right)=\inf\left\{ t\geq0:\omega\left(t\right)=\zeta\right\} 
\]
which are càdlàg and stay at the cemetery $\zeta$ after their lifetime%
\footnote{With abuse of notation we use $\zeta$ for the isolated point as well
as the map $\zeta:\Omega\rightarrow\left[0,\infty\right]$. Since
$\left\{ \omega:\zeta\left(\omega\right)<t\right\} =\bigcup_{r<t,r\in\mathbb{Q}}\left\{ \omega:X_{r}\left(\omega\right)=\zeta\right\} \in\mathcal{F}_{t}^{0}$
it follows that $\zeta\left(.\right)$ is a random variable. In fact
the cemetery $\zeta$ is not essential for our arguments but it allows
for intuitive characterizations of predictable processes (Proposition
\ref{prop:canonical pathspace}) which is useful for some proofs.%
} $\zeta\left(\omega\right)$. Denote the coordinate process $\left(X_{t}\right)_{t\in\mathbb{R}_{+}}$,
\[
X_{t}\left(\omega\right):=\omega\left(t\right)
\]
as well as 
\[
X_{t-}\left(\omega\right):=\lim_{s\uparrow t}X_{s}\left(\omega\right),\text{ }\Delta_{t}X\left(\omega\right):=X_{t}\left(\omega\right)-X_{t-}\left(\omega\right).
\]
Further, we introduce a $\sigma$-field and filtration on $\Omega$,
\[
\mathcal{F}^{0}:=\sigma\left(X_{s},0\leq s\right)\text{ and }\mathcal{F}_{t}^{0}:=\sigma\left(X_{s},0\leq s\leq t\right).
\]
Note that $\Omega$ can be endowed with the Skorohod topology in which
case the Borel $\sigma$-algebra equals $\mathcal{F}^{0}$ and with
this topology $\Omega$ is a Polish space (cf.~\cite[Chapter VI, Theorem 1.4]{MR1943877}
or \cite[Chapter 0]{bertoin1998levy}; however we are not making use
of this Polish structure in this article). To speak about predictable
processes we have to define $\mathcal{F}_{0-}$ and $X_{0-}$ which
we simply define as $\mathcal{F}_{0-}=\left\{ \emptyset,\Omega\right\} $,
$X_{0-}=\zeta$; further denote the Borel $\sigma-$field of $\mathbb{R}_{+}$
with $\mathcal{B}_{\mathbb{R}_{+}}$ and the Lebesgue measure on $\mathbb{R}_{+}$
with $\lambda$. As usual we call any collection of random variables
on $\Omega$ indexed by time $t\in\mathbb{R}_{+}$ a stochastic process
and a stochastic process $\left(F_{t}\right)$ is said to be measurable
if $\left(t,\omega\right)\mapsto F_{t}\left(\omega\right)$ is measurable
on $\mathbb{R}_{+}\times\Omega$ with respect to $\mathcal{B}_{\mathbb{R}_{+}}\times\mathcal{F}^{0}$;
similarly $\left(F_{t}\right)$ is progressively measurable if for
each $t\in\mathbb{R}_{+}$ the map $\left[0,t\right]\times\Omega\rightarrow\mathbb{R}$,
$\left(s,\omega\right)\mapsto F_{s}\left(\omega\right)$ is $\left(\mathcal{B}_{\left[0,t\right]}\times\mathcal{F}_{t}^{0}\right)$-measurable
and a process $\left(F_{t}\right)$ is said to be adapted to $\left(\mathcal{F}_{t}^{0}\right)$
if for each $t\in\mathbb{R}_{+}$, $F_{t}$ is $\mathcal{F}_{t}^{0}$-measurable.
The optional $\sigma$-field $\mathcal{O}^{0}$ on $\mathbb{R}_{+}\times\Omega$
is generated by the real-valued càdlàg processes adapted to $\left(\mathcal{F}_{t}^{0}\right)_{t\geq0}$
and the predictable $\sigma$-field $\mathcal{P}^{0}$ on $\mathbb{R}_{+}\times\Omega$
is generated by the real-valued processes adapted to%
\footnote{this is equivalent except at $t=0$ to being adapted to $\left(\mathcal{F}_{t}^{0}\right)_{t\geq0}$
. To quote Dellacherie--Meyer \cite[p121-IV]{MR521810}, ``in all
considerations on the predictable $\sigma$-field, time $0$ plays
the devil's role''.%
} $\left(\mathcal{F}_{t-}^{0}\right)_{t\geq0}$ with càg (left-continuous)
paths on $\left(0,\infty\right)$, cf.~\cite[p121-IV]{MR521810}
for further properties of these $\sigma$-algebras. Given $\left(t,\omega\right)\in\mathbb{R}_{+}\times\Omega$
and $r\in\mathbb{R}^{d}\cup\left\{ \zeta\right\} $, we denote $\omega^{t,r}\in\Omega$
the càdlàg path which coincides until time $t$ with $\omega$ but
has a jump at time $t$ in direction $r\in\mathbb{R}^{d}\cup\left\{ \zeta\right\} $
and stays constant after time $t$, viz. 
\[
\omega^{t,r}\left(s\right)=\omega\left(s\wedge t\right)+r1_{s\geq t},\,\, s\in\mathbb{R}_{+}
\]
(with the convention $a+\zeta=\zeta$ for any $a\in\mathbb{R}^{d}\cup\left\{ \zeta\right\} $);
further denote with $\omega_{\wedge t}$ the path $\omega$ stopped
at time $t$, viz. 
\[
\omega_{\wedge t}\left(s\right)=\omega\left(s\wedge t\right),\,\, s\in\mathbb{R}_{+}.
\]
These two perturbations of the path $\omega$ play a central role
in this article and lead to the definition of a causal time and space
derivative of a process in Section \ref{sec:Causal-derivatives-of}.
However, at this point we only recall that such perturbations arise
naturally when working on the canonical path-space $\left(\Omega,\mathcal{F}^{0},\left(\mathcal{F}_{t}^{0}\right)\right)$
and lead to an intuitive characterization of predictable and optional
processes.
\begin{prop}[{Dellacherie--Meyer \cite[p147-IV]{MR521810}}]
\textup{\label{prop:canonical pathspace}~}
\begin{enumerate}
\item \textup{The predictable $\sigma$-field $\mathcal{P}^{0}$ on $\mathbb{R}_{+}\times\Omega$
is generated by the two maps
\[
\left(t,\omega\right)\mapsto t\text{ and }\left(t,\omega\right)\mapsto\omega^{t,\zeta}
\]
and the optional $\sigma$-field $\mathcal{O}^{0}$ on $\mathbb{R}_{+}\times\Omega$
is generated by the two maps 
\[
\left(t,\omega\right)\mapsto t\text{ and }\left(t,\omega\right)\mapsto\omega_{\wedge t}.
\]
}
\item \textup{A measurable process $F$ is predictable (wrt to $\mathcal{P}^{0}$)
iff 
\[
F_{t}\left(\omega\right)=F_{t}\left(\omega^{t,\zeta}\right)\,\,\,\forall\left(t,\omega\right)\in\mathbb{R}_{+}\times\Omega
\]
and a measurable process $F$ is optional (wrt to $\mathcal{O}^{0}$)
iff
\[
F_{t}\left(\omega\right)=F_{t}\left(\omega_{\wedge t}\right)\,\,\,\forall\left(t,\omega\right)\in\mathbb{R}_{+}\times\Omega.
\]
 }
\end{enumerate}
\end{prop}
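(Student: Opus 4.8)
The plan is to prove (1) and (2) simultaneously from two structural facts. First, both perturbation maps $\Phi_{p}:(t,\omega)\mapsto(t,\omega^{t,\zeta})$ and $\Phi_{o}:(t,\omega)\mapsto(t,\omega_{\wedge t})$ are \emph{idempotent}: the formulas recalled above give $(\omega^{t,\zeta})^{t,\zeta}=\omega^{t,\zeta}$ and $(\omega_{\wedge t})_{\wedge t}=\omega_{\wedge t}$. Secondly, adaptedness of a process is precisely invariance under one of these two maps. I will carry out the optional case in detail; the predictable one is entirely parallel, with the closed interval $[0,t]$ replaced by the half-open $[0,t)$ and $\mathcal{F}_{t}^{0}$ by $\mathcal{F}_{t-}^{0}=\sigma(X_{s}:s<t)$ for $t>0$ (at $t=0$ everything degenerates to constants, which is where ``time $0$ plays the devil's role'').

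For the inclusion $\sigma(t,\Phi_{o})\subseteq\mathcal{O}^{0}$: since $\mathcal{F}^{0}$ is generated by the coordinate maps, $\sigma(t,\Phi_{o})$ is generated by $(t,\omega)\mapsto t$ together with the maps $(t,\omega)\mapsto X_{s}(\omega_{\wedge t})=X_{s\wedge t}(\omega)$, $s\ge 0$. Each of the latter is, viewed as a process in $t$, c\`adl\`ag --- it is the coordinate process stopped at $s$ --- and adapted, hence $\mathcal{O}^{0}$-measurable; so $\sigma(t,\Phi_{o})\subseteq\mathcal{O}^{0}$. For the predictable statement the corresponding generators are $(t,\omega)\mapsto X_{s}(\omega^{t,\zeta})$, which equals $X_{s}(\omega)$ for $t>s$ and $\zeta$ for $t\le s$; as a process in $t$ this is left-continuous on $(0,\infty)$ and $\mathcal{F}_{t-}^{0}$-adapted (the jump into the cemetery at $t=s$ respecting left-continuity because $1_{(s,\infty)}$ does), hence $\mathcal{P}^{0}$-measurable.

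For the reverse inclusion $\mathcal{O}^{0}\subseteq\sigma(t,\Phi_{o})$ the key observation is that \emph{any} adapted process $Y$ satisfies $Y_{t}(\omega)=Y_{t}(\omega_{\wedge t})$ for all $(t,\omega)$, because $Y_{t}$ is $\mathcal{F}_{t}^{0}$-measurable while $\omega$ and $\omega_{\wedge t}$ carry the same values $X_{s}$, $s\le t$. A c\`adl\`ag adapted $Y$ is moreover jointly $\mathcal{B}_{\mathbb{R}_{+}}\times\mathcal{F}^{0}$-measurable, so this identity reads $Y=Y\circ\Phi_{o}$; as $\Phi_{o}$ is --- by the very definition of $\sigma(t,\Phi_{o})$ --- measurable from $(\mathbb{R}_{+}\times\Omega,\sigma(t,\Phi_{o}))$ to $(\mathbb{R}_{+}\times\Omega,\mathcal{B}_{\mathbb{R}_{+}}\times\mathcal{F}^{0})$, the composition $Y\circ\Phi_{o}=Y$ is $\sigma(t,\Phi_{o})$-measurable. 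Since such $Y$ generate $\mathcal{O}^{0}$, this yields $\mathcal{O}^{0}=\sigma(t,\Phi_{o})$, the optional half of (1); the predictable half follows identically from $Y_{t}(\omega)=Y_{t}(\omega^{t,\zeta})$ for $\mathcal{F}_{t-}^{0}$-adapted $Y$ (using $\mathcal{F}_{t-}^{0}=\sigma(X_{s}:s<t)$ and that $\omega$, $\omega^{t,\zeta}$ agree on $[0,t)$) together with the joint measurability of left-continuous adapted processes.

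Finally, (2) is a formal consequence. If $F$ is a measurable process with $F_{t}(\omega)=F_{t}(\omega_{\wedge t})$, i.e.\ $F=F\circ\Phi_{o}$, then, $\Phi_{o}$ being measurable from $(\mathbb{R}_{+}\times\Omega,\mathcal{O}^{0})$ to $(\mathbb{R}_{+}\times\Omega,\mathcal{B}_{\mathbb{R}_{+}}\times\mathcal{F}^{0})$ by (1) and $F$ being $\mathcal{B}_{\mathbb{R}_{+}}\times\mathcal{F}^{0}$-measurable, the composition $F=F\circ\Phi_{o}$ is $\mathcal{O}^{0}$-measurable, i.e.\ optional. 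Conversely, if $F$ is optional it is $\sigma(\Phi_{o})$-measurable (note $\sigma(\Phi_{o})=\sigma(t,\Phi_{o})=\mathcal{O}^{0}$, since the first component of $\Phi_{o}$ is $t$), so by Doob--Dynkin $F=h\circ\Phi_{o}$ for some measurable $h$, whence idempotency yields $F\circ\Phi_{o}=h\circ\Phi_{o}\circ\Phi_{o}=h\circ\Phi_{o}=F$, that is $F_{t}(\omega)=F_{t}(\omega_{\wedge t})$; the predictable case is verbatim with $\Phi_{p}$. (One can sidestep Doob--Dynkin by instead checking that $\{A:1_{A}=1_{A}\circ\Phi_{o}\}$ is a $\sigma$-field containing the generators of $\mathcal{O}^{0}$, and then passing from sets to a general $F$ by approximation with simple functions.) I do not expect a genuine obstacle here; the only places that demand attention are the degenerate behaviour at $t=0$ in the predictable statement and the bookkeeping around the cemetery $\zeta$ when checking that the generating maps really produce processes in the right class (c\`adl\`ag adapted, respectively left-continuous adapted).
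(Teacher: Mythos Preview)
The paper does not prove this proposition at all: it is stated as a quotation from Dellacherie--Meyer \cite[p147-IV]{MR521810}, so there is no ``paper's own proof'' to compare against. Your argument is correct and is essentially the standard one (and close in spirit to what Dellacherie--Meyer do): the two perturbation maps $\Phi_{o}$, $\Phi_{p}$ are idempotent, the generating processes $X_{s\wedge t}$ resp.\ $X_{s}(\omega^{t,\zeta})$ lie in the right class (c\`adl\`ag adapted resp.\ left-continuous $\mathcal{F}^{0}_{t-}$-adapted), and the Galmarino-type identity $Y_{t}(\omega)=Y_{t}(\omega_{\wedge t})$ for $\mathcal{F}^{0}_{t}$-measurable $Y_{t}$ (resp.\ the $\mathcal{F}^{0}_{t-}$ variant) gives the reverse inclusion; part (2) then drops out from idempotency. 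The only cosmetic point: your alternative route for the ``only if'' in (2) via the $\sigma$-field $\{A:1_{A}=1_{A}\circ\Phi_{o}\}$ is self-contained and avoids invoking the Polish structure of $\Omega$ needed for a clean Doob--Dynkin argument, so it is the preferable of the two options you offer.
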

We say that a sequence $\left(\pi\left(n\right)\right)_{n}$ of partitions
of \noun{$\mathbb{R}_{+}$} (i.e.~$\pi\left(n\right)=\left\{ t_{k}^{n}\right\} _{k\geq0}$
where $t_{k}^{n}\in\mathbb{R}_{+}$, $t_{k}^{n}\leq t_{k+1}^{n}$
and $t_{0}^{0}=0$) converges to the identity if 
\[
\sup_{k\geq0}\left|t_{k}^{n}-t_{k-1}^{n}\right|\rightarrow0\text{ and }\sup_{k\geq0}t_{k}^{n}\rightarrow+\infty\text{ as }n\rightarrow\infty.
\]
Denote for a given $t\in\mathbb{R}_{+}$ with $t^{\pi\left(n\right)}$
resp.~$t_{\pi\left(n\right)}$ the nearest elements of the partition
$\pi\left(n\right)$ to the right resp.~left of $t$ with the convention
$t\in\left[t_{\pi\left(n\right)},t^{\pi\left(n\right)}\right)$; similarly
we use $\ensuremath{\prescript{\pi\left(n\right)}{}{t}}$ resp.~$\ensuremath{\prescript{}{\pi\left(n\right)}{t}}$
with the convention $t\in\left(\ensuremath{\prescript{}{\pi\left(n\right)}{t}},\ensuremath{\prescript{\pi\left(n\right)}{}{t}}\right]$.
Further define for $\omega\in\Omega$ the path $\omega^{\pi\left(n\right)}\in\Omega$
as the piecewise constant approximation%
\footnote{With the convention $\zeta+r=\zeta$ for $r\in\mathbb{R}^{d}\cup\left\{ \zeta\right\} $.%
} of $\omega$ along $\pi\left(n\right)$ 
\begin{equation}
\omega^{\pi\left(n\right)}\left(t\right)=\omega\left(0\right)+\sum_{k:t_{k-1}^{n}\in\pi\left(n\right)}\left(\omega\left(t_{k}^{n}\right)-\omega\left(t_{k-1}^{n}\right)\right)1_{t\geq t_{k}^{n}}.\label{eq:omega_pi}
\end{equation}
Given a probability measure $\mathbb{P}$ denote with $\left(\Omega,\mathcal{F}^{\mathbb{P}},\mathcal{F}_{t}^{\mathbb{P}},\mathbb{P}\right)$\emph{
}the usual augmentation (i.e.~right-continuous and complete) of $\left(\Omega,\mathcal{F}^{0},\mathcal{F}_{t}^{0},\mathbb{P}\right)$.
Denote with $\mathcal{M}_{c}^{semi}$ the set of probability measures
$\mathbb{P}$ under which the coordinate process $X$ is a semimartingale
with $\mathbb{P}$-a.s.~continuous trajectories and similarly with
$\mathcal{M}_{c}$ resp.~$\mathcal{M}_{c}^{loc}$ the space of measures
under which $X$ is a martingale resp.~locale martingale with $\mathbb{P}$-a.s.~trajectories.
Above objects and terminology (measurable, the optional and predictable
$\sigma$-fields, etc.) can be defined with respect to the completed
$\sigma$-algebra $\mathcal{F}^{\mathbb{P}}$ resp.~$\left(\mathcal{F}_{t}^{\mathbb{P}}\right)$
instead of $\mathcal{F}^{0}$ and $\left(\mathcal{F}_{t}^{0}\right)$,
e.g.~giving rise to the predictable $\sigma$-field $\mathcal{P}^{\mathbb{P}}$,
the optional $\sigma$-field $\mathcal{O}^{\mathbb{P}}$ (instead
of $\mathcal{O}^{0},\mathcal{P}^{0}$) etc. Recall that two processes
$F$ and $G$ are said to be indistinguishable/a version of each other
on $\left(\Omega,\mathcal{F}^{\mathbb{P}},\mathcal{F}_{t}^{\mathbb{P}},\mathbb{P}\right)$
if $\mathbb{P}\left(F_{t}=G_{t}\,\,\forall t\in\mathbb{R}_{+}\right)=1$;
since parts of this article require us to argue pathwise (i.e.~with
a fixed version rather than modulo indistinguishability) we decided
to be rather pedantic about spelling out quantifiers for all statements.

\section{\label{sec:Causal-derivatives-of}Causal functionals and causal derivatives}
\begin{defn}
We say that a map $F:\mathbb{R}_{+}\times\Omega\rightarrow\mathbb{R}$
is a causal functional if 
\[
F_{t}\left(\omega_{\wedge t}\right)=F_{t}\left(\omega\right)\,\,\,\,\forall\left(t,\omega\right)\in\mathbb{R}_{+}\times\Omega.
\]
\end{defn}
\begin{rem}
We do not require measurability of causal functionals with respect
to the uncompleted $\sigma$-algebra $\left(\mathcal{B}_{\mathbb{R}_{\geq0}}\times\mathcal{F}^{0}\right)$;
in other words not every causal functional $F$ is a process on $\left(\Omega,\mathcal{F}^{0},\mathcal{F}_{t}^{0}\right)$
but if $F$ is causal and $\left(\mathcal{B}_{\mathbb{R}_{\geq0}}\times\mathcal{F}^{0}\right)$-measurable
then $F$ is an $\mathcal{O}^{0}$-optional process on the canonical
path-space $\left(\Omega,\mathcal{F}^{0},\mathcal{F}_{t}^{0}\right)$
(by Proposition \ref{prop:canonical pathspace}, see also Remark \ref{rem:no measurability}).
Of course, every $\mathcal{O}^{0}$-optional process is a causal functional.
\end{rem}
Following Dupire \cite{Duprire_FIto} (see also earlier work of Fliess
\cite{fliess1983concept,fliess1981fonctionnelles} in the context
of bounded variation paths from which took the term \emph{causal}),
we define the time and space derivatives of causal functionals. 
\begin{defn}
Let $F:\mathbb{R}_{+}\times\Omega\rightarrow\mathbb{R}$ be a causal
functional. If $\forall\left(t,\omega\right)\in\mathbb{R}_{+}\times\Omega$,
$t<\zeta\left(\omega\right)$ the map 
\[
\mathbb{R}^{d}\ni r\mapsto F_{t}\left(\omega^{t,r}\right)\in\mathbb{R}
\]
is continuously differentiable at $r=0$ then we denote with $\nabla F_{t}\left(\omega\right)=\left(\partial_{1}F_{t}\left(\omega\right),\ldots,\partial_{d}F_{t}\left(\omega\right)\right)$
its Jacobian and set $\nabla F_{t}\left(\omega\right)=0$ for $\left(t,\omega\right)$
with $t\geq\zeta\left(\omega\right)$. We then say that $F$ has $\nabla F:\mathbb{R}_{+}\times\Omega\rightarrow\mathbb{R}^{d}$
as causal space derivative. Similarly, we define the matrix-valued
second causal space derivative $\Delta F=\left(\partial_{i}\partial_{j}F\right)_{i,j=1}^{d}$
as the Hessian.
\begin{defn}
\label{def:time-derivative}Let $F:\mathbb{R}_{+}\times\Omega\rightarrow\mathbb{R}$
be a causal functional. If $\forall\left(t,\omega\right)\in\mathbb{R}_{+}\times\Omega$,
$t<\zeta\left(\omega\right)$ the map 
\[
\mathbb{R}_{+}\ni r\mapsto F_{t+r}\left(\omega_{\wedge t}\right)\in\mathbb{R}
\]
is continuous and has a right-derivative at $r=0$ then we denote
with $\partial_{0}F_{t}\left(\omega\right)$ this right derivative
and set $\partial_{0}F_{t}\left(\omega\right)=0$ if $t\geq\zeta\left(\omega\right)$.
If additionally $r\mapsto\partial_{0}F_{r}\left(\omega\right)$ is
Riemann integrable then we say that $F$ has the map $\partial_{0}F:\mathbb{R}_{+}\times\Omega\rightarrow\mathbb{R}$
as causal time derivative.
\end{defn}
\end{defn}
\begin{rem}
These derivatives can be seen as natural extension of the usual time
and space derivatives of functions, e.g.~if $F_{t}\left(\omega\right)=f\left(t,X\left(t,\omega\right)\right)$
and $f\in C^{1,1}\left(\mathbb{R}_{+}\times\mathbb{R},\mathbb{R}\right)$
then $\partial_{0}F_{t}\left(\omega\right)=\frac{df}{dt}\left(t,X\left(t,\omega\right)\right)$
and $\nabla F_{t}\left(\omega\right)=\frac{df}{dx}\left(t,X\left(t,\omega\right)\right)$
(if $t<\zeta\left(\omega\right)$). Further, it is easy to verify
the analogues of the standard rules of differentiation, i.e.~chain
and product rules, etc.
\begin{rem}
\label{rm: causal der. adapted}If $\omega$ and $\tilde{\omega}$
coincide on $\left[0,t\right]$ then $\nabla F_{r}\left(\omega\right)=\nabla F_{r}\left(\tilde{\omega}\right)$
$\forall r\leq t$ and the same statement is true for $\Delta F$
and $\partial_{0}F$, i.e.~the causal derivatives of a causal functional
are again causal functionals.
\begin{rem}
\label{rem:indistinguishable}These\emph{ causal derivatives are defined
pathwise, hence do not respect null-sets on $\left(\Omega,\mathcal{F}^{\mathbb{P}},\mathcal{F}_{t}^{\mathbb{P}},\mathbb{P}\right)$
}for a given $\mathbb{P}\in\mathcal{M}_{c}^{semi}$ (unlike the Fréchet
derivative in directions of the Cameron--Martin space if $\mathbb{P}$
is Gaussian, as used by Bismut, Malliavin et al.). It may happen that
two processes $F$ and $G$ are indistinguishable on \emph{$\left(\Omega,\mathcal{F}^{\mathbb{P}},\mathcal{F}_{t}^{\mathbb{P}},\mathbb{P}\right)$},
both have causal time and space derivatives but apriori there is no
reason why these causal derivatives should actually be indistinguishable
from each other%
\footnote{\label{fn:different version}e.g.~let $F_{t}\left(\omega\right)=0$,
$\tilde{F}_{t}\left(\omega\right)=1_{\Delta_{t}X\left(\omega\right)\neq0}$
and $\overline{F}_{t}\left(\omega\right)=c\Delta_{t}X\left(\omega\right)$
for some $c\in\mathbb{R}$. Then $F$,$\tilde{F}$ and $\overline{F}$
are indistinguishable on $\left(\Omega,\mathcal{F}^{\mathbb{P}},\mathcal{F}_{t}^{\mathbb{P}},\mathbb{P}\right)$
$\forall\mathbb{P}\in\mathcal{M}_{c}^{semi}$, $F$ has causal time
and space derivatives (all equal $0$), however $\tilde{F}$ does
not even have a causal space derivative and $\overline{F}$ has a
space derivative equal to the arbitrary chosen $c$! %
}. To sum up, versions matter and we need an additional regularity
assumption.
\end{rem}
\end{rem}
Note that throughout this section we avoided the language of probability/measure
theory and above remark shows how things can go wrong --- in fact
above remark might disturb our reader because it implies that we have
to drop the elegant, usual approach of stochastic analysis to regard
two processes as equal if they are indistinguishable under the measure
$\mathbb{P}$. Instead we work pathwise. We hope to reconcile her
in Section \ref{sec:An-ito-Formula} where it turns out that under
an additional assumption on $F$ (namely regularity as introduced
in the section below \emph{and} causal differentiability) the examples
in Remark \ref{rem:indistinguishable} are to a large extent excluded
(Proposition \ref{prop:uniqueness}). In the mean time we simply ask
for her patience and mention that situations in stochastic analysis
in which indistinguishability is not a sufficient criteria are not
too uncommon: prominent examples include Clark's robustness problem
in nonlinear filtering \cite{ClarkRobustness}, quasi--sure analysis
via aggregation \cite{denis2006theoretical,EJP2011-67}, pathwise
expansions \cite{buckdahn2002pathwise} or the recent interest in
pathwise delta-hedging arguments where in all these cases the existence
of a ``good pathwise version'' is important.
\end{rem}

\section{\label{sec:regular-and-differentiable}Regular and differentiable
causal functionals}

We have to cope with the problem that functionals on an infinite-dimensional
(path-)space can be causal differentiable but not continuous with
respect to uniform topology. A simple solution is to additionally
require such a continuity, however, since we are interested in functionals
of unbounded variation paths this is indeed a very strong assumption.
This section introduces a class of functionals, regular enough to
prove a functional It\={o} formula but still rich enough to include
aggregators of the stochastic integral, quadratic variation, Doléans--Dade
exponential etc.

\subsection{Regular, causal functionals}

The proof of the functional It\={o} formula requires to understand
the behaviour of causal functionals under pathwise approximations
of the coordinate process $X$. We briefly recall that pathwise approximations
of functionals of unbounded variation paths are more subtle than the
case of bounded variation paths, already under a fixed measure $\mathbb{P}\in\mathcal{M}_{c}^{semi}$.
\begin{example}
\label{ex: Levy-area}Fix $\mathbb{P}\in\mathcal{M}_{c}^{semi}$ and
consider the stochastic process on $\left(\Omega,\mathcal{F},\mathcal{F}_{t}^{\mathbb{P}},\mathbb{P}\right)$
defined as 
\begin{eqnarray*}
F_{t}\left(\omega\right) & = & \left(\int_{0}^{t}X^{1}dX^{2}\right)\left(\omega\right)-\left(\int_{0}^{t}X^{2}dX^{1}\right)\left(\omega\right)
\end{eqnarray*}
(any version of the stochastic integral on the right hand side), i.e.~Lévy's
area process. To the best of our knowledge, the first explicit example
of an approximation of the process $X$ which leads to a correction
term is due to McShane \cite{MR0402921} for the case when $X$ is
a Brownian motion on $\left(\Omega,\mathcal{F}^{\mathbb{P}},\mathcal{F}_{t}^{\mathbb{P}},\mathbb{P}\right)$,
that is for an arbitrary $c\in\mathbb{R}$ there exists for $\mathbb{P}$-a.e.~$\omega$
a sequence $\left(\omega^{n}\right)_{n}\subset\Omega$ s.t.~$\left|X\left(\omega^{n}\right)-X\left(\omega\right)\right|_{\left[0,t\right],\infty}\rightarrow_{n}0$
but the process $F\left(\omega^{n}\right)$ converges to the process
$\left(F_{t}+c.t\right)_{t}$ $\mathbb{P}$-a.s.~--- we do not spell
out the details (but refer to the excellent presentation in \cite[Chapter VI]{MR0402921,ikeda-watanabe-89}
in the context of Wong--Zakai approximations) though we briefly recall
the main idea: consider the dyadic partitions and interpolate $\omega$
between two points not linear, but by choosing between two interpolating
functions to construct $\omega^{n}$. The key is to make this choice
dependent on the sign of (an approximation of) the Lévy area surpassed
in this time interval%
\footnote{McShane's and Sussman's approximations are not adapted but, like the
usual piecewise linear approximation, they can be simply shifted one
interval back in time to make them adapted.%
}. This highly-oscillatory perturbation is not canceled in the limit
as the mesh size tends to $0$ and picked up in an additive correction
term of the iterated integrals of $X$. Also note that such a behaviour
is not a question of Fisk--Stratonovich vs.~It\={o} integration (for
the Lévy-area $F$ it does not matter which integration is used since
the quadratic variation brackets cancel). Sussman (cf.~\cite{MR1119845,friz-oberhauser-2008b})
even shows that for arbitrary $N\in\mathbb{N}$ one can find approximations
such that the first $N$ iterated (Stratonovich) integrals converge
to the iterated integrals of $X$ but lead to a correction term on
the $\left(N+1\right)$-th level of iterated integrals. 
\end{example}
A classic strategy to cope with such phenomena is to restrict attention
to a class of ``natural'' finite-dimensional approximations, typically
piecewise constant or piecewise linear approximations (depending on
an It\={o} or a Stratonovich approach), e.g.~to prove Wong--Zakai
theorems \cite[Chapter VI]{wong1965convergence,eugene1965relation,ikeda-watanabe-89},
large deviation results à la Varadhan, Freidlin--Wentzell, Azencott
\cite{varadhan1984large,freuidlin1998random,azencott1980grandes},
or Stroock--Varadhan type support theorems \cite{MR532498} etc. In
view of an It\={o}-calculus and that the causal derivatives are defined
by piecewise constant perturbations resp.~perturbations by jumps,
it is not surprising that for us the ``natural class'' of approximations
are the piecewise constant approximations of the coordinate process
$X\left(\omega\right)=\omega$ (for the definition below recall that
$\omega^{\pi}$ is the piecwise constant approximation of $\omega$
along $\pi$; as in (\ref{eq:omega_pi})).
\begin{defn}
\label{def:regular, causal}We say a causal functional $F:\mathbb{R}_{+}\times\Omega\rightarrow\mathbb{R}$
is a weakly regular, càdlàg functional if for every $\mathbb{P}\in\mathcal{M}_{c}^{semi}$
there exists a sequence of partitions $\pi=\left(\pi\left(n\right)\right)_{n}$,
converging to identity such that 
\begin{enumerate}
\item $F$ is an \emph{$\left(\mathcal{F}_{t}^{\mathbb{P}}\right)$-}adapted
càdlàg process on%
\footnote{i.e.~$t\mapsto F_{t}$ is $\mathbb{P}$-a.s.~càdlàg%
} $\left(\Omega,\mathcal{F}^{\mathbb{P}},\mathcal{F}_{t}^{\mathbb{P}},\mathbb{P}\right)$,
\item the \emph{$\left(\mathcal{F}_{t}^{\mathbb{P}}\right)$-}adapted, measurable
process $\left(t,\omega\right)\mapsto F_{t}^{n}\left(\omega\right):=F_{t}\left(\omega^{\pi\left(n\right)}\right)$
has left limits (for every $n$ $\mathbb{P}$-a.s.) and $F_{-}^{n}\rightarrow F_{-}$
as $n\rightarrow\infty$ uniformly on compacts in $\mathbb{P}$-probability
(henceforth ucp%
\footnote{That is, $\forall T\in\mathbb{R}_{+}$ we have $\sup_{r\in\left[0,T\right]}\left|F_{r-}^{n}-F_{r-}\right|\rightarrow_{n\rightarrow\infty}0\text{ in probability on }\left(\Omega,\mathcal{F}^{\mathbb{P}},\mathcal{F}_{t}^{\mathbb{P}},\mathbb{P}\right).$%
}).
\end{enumerate}
If point (2) holds for every sequence of partitions $\pi=\left(\pi\left(n\right)\right)_{n}$
converging to identity and every $\mathbb{P}\in\mathcal{M}_{c}^{semi}$
then we simply say that $F$ is a regular, càdlàg functional. Similarly,
we say $F$ is a regular càglàd (resp.~continuous) functional if
in point (1) we replace the word càdlàg by càglàd (resp.~continuous).\end{defn}
\begin{rem}
Instead of point (2) we could simply demand that $F_{t}^{n}$ is a
càdlàg (or càglàd) process and $F^{n}\rightarrow_{n}F$ ucp. This
would already include a large class of functionals, however, the above,
more general formulation allows to include functionals $F$ which
only have left limits along the approximation $F^{n}$.
\begin{rem}
It is easy to write down a functional which is only weakly regular
but not regular. However, Section \ref{sec:A-regular-and} shows that
even processes with complex path-dependence have a ``canonical version''
which is regular, i.e.~a regular aggregator which is also causally
differentiable.
\end{rem}
\end{rem}
The important part of Definition \ref{def:regular, causal} is point
(2) since point (1) is by Proposition \ref{prop:canonical pathspace}
already satisfied whenever the causal functional is a measurable process
with càdlàg trajectories on the canonical probability space $\left(\Omega,\mathcal{F}^{0},\mathcal{F}_{t}^{0}\right)$.
An interesting subclass of functionals which are regular are the ones
treated by Dupire \cite{Duprire_FIto}.
\begin{example}
\label{ex: continuous proc}Let $F:\mathbb{R}_{+}\times\Omega\rightarrow\mathbb{R}$
be an $\mathcal{O}^{0}$-optional process on $\left(\Omega,\mathcal{F}^{0},\mathcal{F}_{t}^{0}\right)$
and as a map uniformly continuous on compacts of $\mathbb{R}_{+}$
with respect to the pseudo-metric on $\mathbb{R}_{+}\times\Omega\backslash\left\{ \omega:\zeta\left(\omega\right)<\infty\right\} $
\[
d_{Dupire}\left(\left(t,\omega\right),\left(\overline{t},\overline{\omega}\right)\right)=\left|t-\overline{t}\right|+\left|\omega_{\wedge t}-\overline{\omega}_{\wedge\overline{t}}\right|_{\infty;\left[0,t\vee\overline{t}\right]}.
\]
This is the class of processes studied by Dupire \cite{Duprire_FIto}.
Note that $\mathcal{O}^{0}\subset\mathcal{O}^{\mathbb{P}}$ and the
continuity with respect to $d_{Dupire}\left(.,.\right)$ guarantees
that $F$ is causal and also the convergence of $F_{t}\left(\omega^{\pi\left(n\right)}\right)$
as required by Definition \ref{def:regular, causal} (even pathwise!).
Hence, every such $F$ is a causal, regular continuous functional.
While this is quite a small class of processes in view of the usual
processes of interest in stochastic analysis, it already allows for
interesting examples (especially in mathematical finance where the
process $F$ can model the price evolution of a path-dependent option).\end{example}
\begin{rem}
\label{rem:no measurability}In Definition \ref{def:regular, causal}
of a regular functional we do not require $F$ to be a measurable
process on $\left(\Omega,\mathcal{F}^{0},\mathcal{F}_{t}^{0}\right)$
(by Proposition \ref{prop:canonical pathspace} only measurability
of $F$ is needed to make a causal functional an $\mathcal{O}^{0}$-optional
process) but only measurability on the completed probability spaces
$\left(\Omega,\mathcal{F}^{\mathbb{P}},\left(\mathcal{F}_{t}^{\mathbb{P}}\right),\mathbb{P}\right)$.
This is a minor technical point but this generality turns out to be
useful when dealing with causal functionals with very complex pathdependence
where measurability with respect to the uncompleted $\sigma$-algebra
$\mathcal{F}^{0}$ resp.~filtration $\left(\mathcal{F}_{t}^{0}\right)$
can be hard to establish (e.g.~in the pathwise Bichteler integral
where ``stopping/hitting times'' appear etc., see Section \ref{sec:A-regular-and}).
\end{rem}

\subsection{The class $C^{1,2}$ of causal functionals}

In this section, we introduce the class $C^{1,2}$ of causal functionals
which are regular and have regular, causal derivatives. Before we
do this we need another definition which will be only needed for the
causal derivatives of a functional.

\subsection{Causal continuity in time and space}
\begin{defn}
We say a causal functional $F:\mathbb{R}_{+}\times\Omega\rightarrow\mathbb{R}$
is causally continuous in space, if $\forall R>0$, $\forall t>0$
there exists a function $\rho_{t,R}:\mathbb{R}_{+}\rightarrow\mathbb{R}_{+}$
which is non-decreasing and $\lim_{x\searrow0}\rho_{t,R}\left(x\right)=0$
such that 
\[
\sup_{r\in\left[0,t\right]}\left|F_{r}\left(\omega^{r,\Delta_{r}}\right)-F_{r}\left(\omega^{r,\tilde{\Delta}_{r}}\right)\right|=\rho_{t,R}\left(\left|\Delta-\tilde{\Delta}\right|_{\infty;\left[0,t\right]}\right)
\]
holds $\forall\omega,\Delta,\tilde{\Delta}\in\Omega$ with $\left|\omega\right|_{\infty,\left[0,t\right]},\left|\Delta\right|_{\infty,\left[0,t\right]},\left|\tilde{\Delta}\right|_{\infty,\left[0,t\right]}\leq R$.
\end{defn}
Similarly, we give the corresponding definition in the time variable.
\begin{defn}
We say a causal functional $F:\mathbb{R}_{+}\times\Omega\rightarrow\mathbb{R}$
is causally continuous in time, if $\forall R>0$, $\forall t>0$
there exists a function $\rho_{t,R}:\mathbb{R}_{+}\rightarrow\mathbb{R}_{+}$
which is non-decreasing and $\lim_{x\searrow0}\rho_{t,R}\left(x\right)=0$
such that 
\[
\sup_{r\in\left[0,t\right]}\left|F_{r+\Delta_{r}}\left(\omega_{\wedge r}\right)-F_{r}\left(\omega\right)\right|=\rho_{t,R}\left(\left|\Delta\right|_{\infty;\left[0,t\right]}+\sup_{r\in\left[0,t\right]}\left|\omega_{r}-\omega_{r-}\right|\right)
\]
holds $\forall\omega\in\Omega$, $\forall\Delta\in C\left(\left[0,t\right],\mathbb{R}_{+}\right)$
with $\left|\omega\right|_{\infty;\left[0,t\right]},\left|\Delta\right|_{\infty;\left[0,t\right]}\leq R$.\end{defn}
\begin{rem}
To motivate above definitions note that they are immediately fulfilled
for $F_{t}\left(\omega\right)=f\left(t,X_{t}\left(\omega\right)\right)$
whenever $f\in C\left(\left[0,T\right]\times\mathbb{R}^{n},\mathbb{R}\right)$
(since $f$ is uniformly continuous on compacts, hence has a modulus).
Intuitively the continuity in space means that $F$ does react to
jumps in the underlying in proportion to the jump size and the continuity
in time implies that jumps of $F$ are only due to jumps in the underlying
(not due to progression of time alone). 
\begin{rem}
If $F$ is continuous with respect to $d_{Dupire}$ (Example \ref{ex: continuous proc})
then $F$ is causally continuous in time and space and the latter
are much weaker requirements for functionals than pathwise continuity
in the sense of Example \ref{ex: continuous proc} since we only require
continuity with respect to jumps resp.~stopping the path at running
time, not continuity with respect to perturbations of the whole past
of the path (e.g.~causal continuity in time and space hold for the
pathwise version of the stochastic integral or quadratic variation
in Section \ref{sec:A-regular-and}).
\end{rem}
\end{rem}

\subsection{The class $C^{1,2}$}

We now introduce the class of functionals we are interested in.
\begin{defn}
\label{def:C12}We denote with $C^{1,2}$ the set of all causal functionals
$F:\mathbb{R}_{+}\times\Omega\rightarrow\mathbb{R}$ which fulfill
\begin{enumerate}
\item $F$ is a regular continuous functional which has one causal time
and two causal space derivatives, 
\item $\partial_{0}F$,$\nabla F$,$\Delta F$ are regular càglàd or regular
càdlàg functionals,
\item $\nabla F,\Delta F$ are causally continuous in time and space.
\end{enumerate}
\end{defn}
\begin{rem}
For functions $f\left(t,X_{t}\right)$ continuity is guaranteed by
differentiability, however for causal functionals the questions of
causal differentiability and continuity wrt to the underlying have
to be treated separately. This is not surprising since we work with
nonlinear functionals on an infinite dimensional (path-)space.
\begin{rem}
We immediately see that $\left\{ \left(t,\omega\right)\mapsto f\left(t,\omega\left(t\right)\right):f\in C^{1,2}\left(\mathbb{R}_{+}\times\mathbb{R}^{d},\mathbb{R}\right)\right\} \subset C^{1,2}$,
i.e.~as our notation reveals, we think of $C^{1,2}$ as the analogue
for general $\sigma\left(X\right)$-adapted processes of the class
of processes $f\left(t,X_{t}\right)$ for $f\in C^{1,2}\left(\mathbb{R}_{+}\times\mathbb{R}^{d}\right)$.
Recall Example \ref{ex: continuous proc} which shows that differentiable
processes with pathwise continuous dependence on the underlying are
included in $C^{1,2}$. But most important for us, the results of
Section \ref{sec:A-regular-and} below show that $C^{1,2}$ includes
functionals which depend on the past of $X\left(\omega\right)=\omega$
in a more complex way like stochastic integrals, quadratic variation,
Doléans--Dade exponentials, compositions thereof, etc.
\end{rem}
We have arrived at a point comparable to the moment in a basic analysis
course where differentiability of functions has been introduced and
it remains to show that many basic functions are differentiable and
to calculate their derivative. In other words, we still have to show
that $C^{1,2}$ contains aggregators of interesting processes (besides
the examples covered by Dupire \cite{Duprire_FIto} resp.~Example
\ref{ex: continuous proc}). This trite observation is the motivation
for Section \ref{sec:A-regular-and}. However, before that, we show
in the section below that the functional It\={o}-formula extends to
the class of $C^{1,2}$ functionals.
\end{rem}

\section{\label{sec:An-ito-Formula}The functional it\={o} formula for $C^{1,2}$-functionals}

We are now in position to prove an extension of the functional It\={o}-formula
to $C^{1,2}$. The basic idea of the proof is similar to a standard
proof of the usual It\={o}-formula, which is to carry out a Taylor
expansion of order 2, and --- following Dupire --- the role of the
usual derivatives is taken by the causal time and space derivatives. 
\begin{thm}
\label{thm:pathdependent ito}If $F\in C^{1,2}$ then $F$ is a continuous
semimartingale on $\left(\Omega,\mathcal{F}^{\mathbb{P}},\mathcal{F}_{t}^{\mathbb{P}},\mathbb{P}\right)$
$\forall\mathbb{P}\in\mathcal{M}_{c}^{semi}$ and 
\begin{equation}
F_{.}-F_{0}=\int_{0}^{.}\partial_{0}F_{r}dr+\sum_{i=1}^{d}\int_{0}^{.}\partial_{i}F_{r-}dX_{r}^{i}+\frac{1}{2}\sum_{i,j=1}^{d}\int_{0}^{.}\partial_{ij}F_{r-}d\left[X^{i},X^{j}\right]_{r}\,\,\,\,\mathbb{P}-a.s.\label{eq:funct_tio}
\end{equation}
\end{thm}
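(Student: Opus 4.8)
The plan is to fix $\mathbb{P}\in\mathcal{M}_{c}^{semi}$ and a sequence of partitions $\pi(n)=\{t^{n}_{k}\}$ converging to the identity along which all of $F,\partial_{0}F,\nabla F,\Delta F$ enjoy the ucp-convergence of Definition \ref{def:regular, causal} (such a sequence exists by the regularity hypothesis in Definition \ref{def:C12}). On the event where things are nice (i.e.\ off a $\mathbb{P}$-null set), I would write, for fixed $T$ and $t\le T$, the telescoping identity
\[
F_{t}\bigl(\omega^{\pi(n)}\bigr)-F_{0}(\omega)=\sum_{k}\Bigl(F_{t\wedge t^{n}_{k}}\bigl(\omega^{\pi(n)}\bigr)-F_{t\wedge t^{n}_{k-1}}\bigl(\omega^{\pi(n)}\bigr)\Bigr),
\]
and split each increment into a \emph{time part} and a \emph{space part}: moving from $t^{n}_{k-1}$ to $t^{n}_{k}$ along the approximating path $\omega^{\pi(n)}$ means first letting time run with the path frozen (a horizontal move, controlled by $\partial_{0}F$) and then inserting the jump $\Delta^{n}_{k}:=\omega(t^{n}_{k})-\omega(t^{n}_{k-1})$ at the right endpoint (a vertical move, controlled by $\nabla F$ and $\Delta F$). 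Concretely, for the horizontal move the definition of $\partial_{0}F$ (Definition \ref{def:time-derivative}) together with Riemann integrability gives $\sum_k \int_{t^n_{k-1}}^{t^n_k}\partial_0 F_r(\omega^{\pi(n)})\,dr$ up to an error that vanishes with the mesh; for the vertical move a second-order Taylor expansion of the finite-dimensional map $r\mapsto F_{t^n_k}\bigl((\omega^{\pi(n)})^{t^n_k,r}\bigr)$ — legitimate since $F$ has two continuous causal space derivatives — yields
\[
\sum_{i}\partial_{i}F_{t^{n}_{k}}\bigl(\cdot\bigr)(\Delta^{n}_{k})^{i}+\tfrac12\sum_{i,j}\partial_{ij}F_{t^{n}_{k}}\bigl(\cdot\bigr)(\Delta^{n}_{k})^{i}(\Delta^{n}_{k})^{j}+o\bigl(|\Delta^{n}_{k}|^{2}\bigr).
\]

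**Passing to the limit.**

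Next I would pass to the limit $n\to\infty$ term by term. On the left, $F_{t}(\omega^{\pi(n)})\to F_{t}(\omega)$ ucp because $F$ is a regular continuous functional. For the horizontal sum, I would replace $\partial_{0}F_{r}(\omega^{\pi(n)})$ by $\partial_{0}F_{r}(\omega)$ using the regularity of $\partial_{0}F$ and then recognise a Riemann sum converging to $\int_{0}^{t}\partial_{0}F_{r}(\omega)\,dr$. For the first-order space sum, the point is that evaluating $\partial_{i}F$ at the partition point $t^{n}_{k}$ on the approximating path is, after using regularity of $\nabla F$ and causal continuity in time and space, asymptotically the same as evaluating the left-limit $\partial_{i}F_{r-}$ on the true path; since $X(\omega^{\pi(n)})$ is (along these partitions) exactly the piecewise-constant sampling of $X$, the sum $\sum_k \partial_i F_{t^n_k}(\cdot)(\Delta^n_k)^i$ is a nonanticipating Riemann–Stieltjes sum and converges ucp in $\mathbb{P}$-probability to the Itô integral $\int_{0}^{t}\partial_{i}F_{r-}\,dX^{i}_{r}$ by the standard approximation theorem for stochastic integrals against the continuous semimartingale $X$ (here one uses that $\partial_i F_-$ is a càglàd, hence locally bounded predictable, integrand). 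For the second-order sum, the same substitutions reduce $\sum_{k}\partial_{ij}F_{t^{n}_{k}}(\cdot)(\Delta^{n}_{k})^{i}(\Delta^{n}_{k})^{j}$ to a Riemann–Stieltjes sum against the approximate quadratic covariation of $X^{i},X^{j}$, which converges in probability to $\int_{0}^{t}\partial_{ij}F_{r-}\,d[X^{i},X^{j}]_{r}$ by the classical theorem on convergence of realized covariations for continuous semimartingales. Finally the remainder $\sum_{k}o(|\Delta^{n}_{k}|^{2})$ is dominated, after a localisation by stopping times bounding $\sup_{r\le T}|X_{r}|$ and the total quadratic variation, by $\varepsilon_n\cdot\sum_k |\Delta^n_k|^2$ with $\varepsilon_n\to0$, hence vanishes in probability. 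Collecting the four limits gives \eqref{eq:funct_tio}; continuity of the right-hand side in $t$ is immediate, so $F$ is a continuous semimartingale. Since $\mathbb{P}\in\mathcal{M}_{c}^{semi}$ was arbitrary, the identity holds for all such $\mathbb{P}$.

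**Main obstacle.**

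The genuinely delicate step is the interchange of the pathwise approximation with the probabilistic limit in the two stochastic-sum terms: one must show that replacing $F$ and its derivatives evaluated on $\omega^{\pi(n)}$ (and at the grid point $t^n_k$ rather than at $r$, and at $r$ rather than at $r-$) by the corresponding quantities on the true path $\omega$ produces an error that is ucp-negligible. This is exactly where the ingredients of the $C^{1,2}$ definition that go beyond mere causal differentiability are needed: the ucp-convergence $F^{n}_{-}\to F_{-}$ (and likewise for the derivatives) handles the $\omega^{\pi(n)}\rightsquigarrow\omega$ substitution, while causal continuity of $\nabla F$ and $\Delta F$ in time and space — which bounds the effect of moving the evaluation time by a small amount and of inserting a small jump — handles the $t^n_k\rightsquigarrow r$ and $r\rightsquigarrow r-$ substitutions and also controls the behaviour across the (at most countably many, $\mathbb{P}$-a.s.\ absent for continuous $X$) jump times. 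A clean way to organise this is to first prove the formula under an a-priori $L^\infty$ localisation (stopping when $|X|$ or $[X]$ exceeds a level $R$), invoke the moduli $\rho_{T,R}$ supplied by causal continuity uniformly on that event, and then remove the localisation by letting $R\to\infty$.
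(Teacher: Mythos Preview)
Your proposal is correct and follows essentially the same route as the paper: telescope $F(\omega^{\pi(n)})$ along the partition, split each increment into a horizontal part (handled via the fundamental theorem for right-derivatives, giving $\int \partial_{0}F_{r}(\omega^{\pi(n)})\,dr$) and a vertical part (handled via a second-order Taylor expansion in the jump variable), and pass to the limit using regularity for the $\omega^{\pi(n)}\to\omega$ substitution, causal continuity in time for the grid-point shift $t^{n}_{k}\to t^{n}_{k-1}$ in the gradient term, and causal continuity in space for the Taylor remainder bounded by $\rho\bigl(\sup_{k}|\Delta^{n}_{k}|\bigr)\sum_{k}|\Delta^{n}_{k}|^{2}$. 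The only cosmetic difference is that the paper works directly with the $\omega$-dependent modulus $\rho_{t,2|\omega|}$ rather than introducing an explicit stopping-time localisation.
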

\begin{proof}
Fix $t>0$ and a sequence $\left(\pi\left(n\right)\right)_{n}$ of
partitions $\pi\left(n\right)=\left(t_{k}^{n}\right)_{k}$ converging
to the identity. For brevity write $\omega^{n}$ for the piecewise
constant càdlàg approximation $\omega^{\pi\left(n\right)}$ of $\omega$
along $\pi\left(n\right)$ and $r^{n},\ensuremath{\prescript{n}{}{r}}$
resp.~$r_{n},\ensuremath{\prescript{}{n}{r}}$ for the nearest element
$\pi\left(n\right)$ to the right resp.~left of $r$ (with our usual
convention $r\in\left[r_{n},r^{n}\right)$ and $r\in\left(\ensuremath{\prescript{}{n}{r}},\ensuremath{\prescript{n}{}{r}}\right]$),
further we use the notation $X_{s,t}\left(\omega\right):=\omega_{s,t}:=\omega\left(t\right)-\omega\left(s\right)$,
$s,t\in\mathbb{R}_{+}$. Throughout assume that $n$ is big enough
s.t.~$t<\sup_{k}t_{k}^{n}$. For every $\omega\in\Omega$ with $\zeta\left(\omega\right)>t$
write 
\begin{eqnarray}
F_{t^{n}}\left(\omega^{n}\right)-F_{0}\left(\omega^{n}\right) & = & F_{t^{n}}\left(\omega_{\wedge t^{n}}^{n}\right)-F_{0}\left(\omega_{\wedge0}^{n}\right)\label{eq:F_t-F_0}\\
 & = & \sum_{k}F_{t_{k}^{n}}\left(\omega_{\wedge t_{k}^{n}}^{n}\right)-F_{t_{k-1}^{n}}\left(\omega_{\wedge t_{k-1}^{n}}^{n}\right)\nonumber 
\end{eqnarray}
(causality of $F$ implies $F_{t^{n}}\left(\omega^{n}\right)=F_{t^{n}}\left(\omega_{\wedge t^{n}}^{n}\right)$
and $F_{0}\left(\omega^{n}\right)=F_{0}\left(\omega_{\wedge0}^{n}\right)$)
where the sum $\sum_{k}$ is taken over all $k$ s.t.~$\pi\left(n\right)\ni t_{k-1}^{n},t_{k}^{n}\leq t^{n}$
and split the terms on the right hand side into 
\begin{eqnarray*}
F_{t_{k}^{n}}\left(\omega_{\wedge t_{k}^{n}}^{n}\right)-F_{t_{k-1}^{n}}\left(\omega_{\wedge t_{k-1}^{n}}^{n}\right) & = & \underbrace{F_{t_{k}^{n}}\left(\omega_{\wedge t_{k}^{n}}^{n}\right)-F_{t_{k}^{n}}\left(\omega_{\wedge t_{k-1}^{n}}^{n}\right)}_{=:S_{k}^{n}\left(\omega\right)}+\underbrace{F_{t_{k}^{n}}\left(\omega_{\wedge t_{k-1}^{n}}^{n}\right)-F_{t_{k-1}^{n}}\left(\omega_{\wedge t_{k-1}^{n}}^{n}\right)}_{=:T_{k}^{n}\left(\omega\right)}\text{.}
\end{eqnarray*}
The intuition is that $S_{k}^{n}$ measures the space increment at
time $t_{k}^{n}$ and $T_{k}^{n}$ measures the time decay on the
interval $\left[t_{k-1}^{n},t_{k}^{n}\right]$. In step 1 and 2 below
we show that for every $\mathbb{P}\in\mathcal{M}_{c}^{semi}$ we have
on $\left(\Omega,\mathcal{F}^{\mathbb{P}},\mathcal{F}_{t}^{\mathbb{P}},\mathbb{P}\right)$
the following convergence in $\mathbb{P}$-probability 
\begin{eqnarray}
\sum_{k}S_{k}^{n} & \rightarrow_{n\rightarrow\infty} & \int_{0}^{t}\nabla F_{r-}\cdot dX_{r}+\frac{1}{2}\int_{0}^{t}Tr\left[\Delta F_{r-}\cdot d\left[X\right]_{r}\right],\label{eq:sum space}\\
\sum_{k}T_{k}^{n} & \rightarrow_{n\rightarrow\infty} & \int_{0}^{t}\partial_{0}F_{r}dr.\label{sum time}
\end{eqnarray}
Further, since $F$ is a regular functional the process $\left(t,\omega\right)\mapsto F_{t-}\left(\omega^{n}\right)$
converges ucp to $F_{-}$ and since $t\mapsto F_{t}$ is $\mathbb{P}$-a.s.~continuous
(\ref{eq:F_t-F_0}) converges in probability to $F_{t}-F_{0}$. This
together with (\ref{eq:sum space}) and (\ref{sum time}) implies
(\ref{eq:funct_tio}) for fixed $t$. The continuity of $F$ then
allows to remove the time dependence of the null-set.

\textbf{Step} \textbf{1: }$\sum_{k}S_{k}^{n}\rightarrow_{n}^{\mathbb{P}}\int_{0}^{t}\nabla F_{-}dX+\frac{1}{2}\int_{0}^{t}Tr\left[\Delta F_{-}d\left[X\right]\right]$\textbf{.}
Fix $\omega\in\Omega$ with $\zeta\left(\omega\right)=\infty$ and
note that $\omega_{\wedge t_{k}^{n}}^{n}=\left(\omega_{\wedge t_{k-1}^{n}}^{n}\right)^{t_{k}^{n},\omega_{t_{k-1}^{n},t_{k}^{n}}}$.
Taylor's theorem applied to the function 
\[
\mathbb{R}^{d}\ni\bullet\mapsto F_{t_{k}^{n}}\left(\left(\omega_{\wedge t_{k-1}^{n}}^{n}\right)^{t_{k}^{n},\bullet}\right)\in\mathbb{R}
\]
guarantees the existence of a $\theta_{k}^{n}\left(\omega\right)\in\left[0,1\right]$
s.t. 
\begin{eqnarray*}
F_{t_{k}^{n}}\left(\omega_{\wedge t_{k}^{n}}^{n}\right)-F_{t_{k}^{n}}\left(\omega_{\wedge t_{k-1}^{n}}^{n}\right) & = & \sum_{i}\partial_{i}F_{t_{k}^{n}}\left(\omega_{\wedge t_{k-1}^{n}}^{n}\right)\omega_{t_{k-1}^{n},t_{k}^{n}}^{i}\\
 &  & +\frac{1}{2}\sum_{i,j}\partial_{ij}F_{t_{k}^{n}}\left(\left(\omega_{\wedge t_{k-1}^{n}}^{n}\right)^{t_{k}^{n},\theta_{k}^{n}\left(\omega\right)\omega_{t_{k-1}^{n},t_{k}^{n}}}\right)\omega_{t_{k-1}^{n},t_{k}^{n}}^{i}\omega_{t_{k-1}^{n},t_{k}^{n}}^{j}.
\end{eqnarray*}
Hence 
\begin{eqnarray}
F_{t_{k}^{n}}\left(\omega_{\wedge t_{k}^{n}}^{n}\right)-F_{t_{k}^{n}}\left(\omega_{\wedge t_{k-1}^{n}}^{n}\right) & = & \sum_{i}\partial_{i}F_{t_{k}^{n}}\left(\omega_{\wedge t_{k-1}}^{n}\right)\omega_{t_{k-1}^{n},t_{k}^{n}}^{i}\label{eq:spac_dec}\\
 &  & +\frac{1}{2}\sum_{i,j}\partial_{ij}F_{t_{k}^{n}}\left(\omega_{\wedge t_{k-1}^{n}}^{n}\right)\omega_{t_{k-1}^{n},t_{k}^{n}}^{i}\omega_{t_{k-1}^{n},t_{k}^{n}}^{j}+r_{n,k}\left(\omega,\omega_{t_{k-1}^{n},t_{k}^{n}}\right)\nonumber 
\end{eqnarray}
where
\begin{eqnarray}
r_{n,k}\left(\omega,\Delta\right) & = & \frac{1}{2}\sum_{i,j}\left[\partial_{ij}F_{t_{k}^{n}}\left(\left(\omega_{\wedge t_{k-1}^{n}}^{n}\right)^{t_{k}^{n},\theta_{k}^{n}\left(\omega\right)\Delta}\right)-\partial_{ij}F_{t_{k}^{n}}\left(\omega_{\wedge t_{k-1}^{n}}^{n}\right)\right]\Delta^{i}\Delta^{j}\,\,\,\forall\Delta\in\mathbb{R}^{d}.\label{eq:r_n_k}
\end{eqnarray}
By the causal continuity in space of $\Delta F=\left(\partial_{ij}F\right)_{i,j=1,\ldots,d}$
we can estimate (using that $\omega_{\wedge t_{k-1}^{n}}^{n}=\left(\omega^{n}\right)^{t_{k}^{n},-\omega_{t_{k-1}^{n},t_{k}^{n}}}$,$\left(\omega_{\wedge t_{k-1}^{n}}^{n}\right)^{t_{k}^{n},\theta_{k}^{n}\left(\omega\right)\Delta}=\left(\omega^{n}\right)^{t_{k}^{n},-\omega_{t_{k-1}^{n},t_{k}^{n}}+\theta_{k}^{n}\left(\omega\right)\Delta}$,$\left|\omega_{t_{k-1}^{n},t_{k}^{n}}\right|\leq2\left|\omega\right|,\left|\omega^{n}\right|\leq2\left|\omega\right|$)
\begin{eqnarray}
 &  & \left|\partial_{ij}F_{t_{k}^{n}}\left(\left(\omega_{\wedge t_{k-1}^{n}}^{n}\right)^{t_{k}^{n},\theta_{k}^{n}\left(\omega\right)\omega_{t_{k-1}^{n},t_{k}^{n}}}\right)-\partial_{ij}F_{t_{k}^{n}}\left(\omega_{\wedge t_{k-1}^{n}}^{n}\right)\right|\nonumber \\
 & = & \left|\partial_{ij}F_{t_{k}^{n}}\left(\left(\omega^{n}\right)^{t_{k}^{n},-\omega_{t_{k-1}^{n},t_{k}^{n}}+\theta_{k}^{n}\left(\omega\right)\omega_{t_{k-1}^{n},t_{k}^{n}}}\right)-\partial_{ij}F_{t_{k}^{n}}\left(\left(\omega^{n}\right)^{t_{k}^{n},-\omega_{t_{k-1}^{n},t_{k}^{n}}}\right)\right|\nonumber \\
 & \leq & \rho_{t,2\left|\omega\right|}\left(\left|\theta_{k}^{n}\left(\omega\right)\omega_{t_{k-1}^{n},t_{k}^{n}}\right|\right)\leq\rho_{t,2\left|\omega\right|}\left(\left|\omega_{t_{k-1}^{n},t_{k}^{n}}\right|\right)\label{eq:r_n_k estimate}
\end{eqnarray}
with $\rho_{t,2\left|\omega\right|}\left(0+\right)=0$. Taking $\sum_{k}$
in (\ref{eq:spac_dec}) yields 
\begin{eqnarray}
\sum_{k}S_{k}^{n}\left(\omega\right) & = & \sum_{k}\nabla F_{t_{k}^{n}}\left(\omega_{\wedge t_{k-1}^{n}}^{n}\right)\cdot X_{t_{k-1}^{n},t_{k}^{n}}\left(\omega\right)\label{eq:S_n_k}\\
 &  & +\frac{1}{2}\sum_{k}Tr\left[X_{t_{k-1}^{n},t_{k}^{n}}^{T}\left(\omega\right)\cdot\Delta F_{t_{k}^{n}}\left(\omega_{\wedge t_{k-1}^{n}}^{n}\right)\cdot X_{t_{k-1}^{n},t_{k}^{n}}\left(\omega\right)\right]\nonumber \\
 &  & +R_{n}\left(\omega\right)\nonumber 
\end{eqnarray}
with $R_{n}\left(\omega\right):=\sum_{k}r_{n,k}\left(\omega,\omega_{t_{k-1}^{n},t_{k}^{n}}\right)$
and by (\ref{eq:r_n_k}) and (\ref{eq:r_n_k estimate}) we arrive
at 
\begin{eqnarray*}
\left|R_{n}\left(\omega\right)\right| & \leq & \frac{1}{2}\sum_{k}\sum_{i,j}\left|\partial_{ij}F_{t_{k}^{n}}\left(\left(\omega_{\wedge t_{k-1}^{n}}^{n}\right)^{t_{k}^{n},\theta_{k}^{n}\left(\omega\right)\omega_{t_{k-1}^{n},t_{k}^{n}}}\right)-\partial_{ij}F_{t_{k}^{n}}\left(\omega_{\wedge t_{k-1}^{n}}^{n}\right)\right|\left|\omega_{t_{k-1}^{n},t_{k}^{n}}^{i}\omega_{t_{k-1}^{n},t_{k}^{n}}^{j}\right|\\
 & \leq & \frac{1}{2}\sup_{k}\rho_{t,2\left|\omega\right|}\left(\left|\omega_{t_{k-1}^{n},t_{k}^{n}}\right|\right)\sum_{k}\sum_{i,j}\left|X_{t_{k-1}^{n},t_{k}^{n}}^{i}\left(\omega\right)\right|\left|X_{t_{k-1}^{n},t_{k}^{n}}^{j}\left(\omega\right)\right|\\
 & \leq & \frac{1}{2}\rho_{t,2\left|\omega\right|}\left(\sup_{k}\left|\omega_{t_{k-1}^{n},t_{k}^{n}}\right|\right)\sum_{i,j}\sum_{k}\left(\left|X_{t_{k-1}^{n},t_{k}^{n}}^{i}\left(\omega\right)\right|^{2}+\left|X_{t_{k-1}^{n},t_{k}^{n}}^{j}\left(\omega\right)\right|^{2}\right).
\end{eqnarray*}
Note that $\sum_{k}\left(\left|X_{t_{k-1}^{n},t_{k}^{n}}^{i}\left(\omega\right)\right|^{2}+\left|X_{t_{k-1}^{n},t_{k}^{n}}^{j}\left(\omega\right)\right|^{2}\right)\rightarrow_{n}\left[X^{i}\right]_{t}+\left[X^{j}\right]_{t}$
in probability and obviously for $\mathbb{P}$-a.e.~$\omega$ we
have 
\[
\rho_{t,2\left|\omega\right|}\left(\sup_{k}\left|\omega_{t_{k-1}^{n},t_{k}^{n}}\right|\right)\rightarrow_{n}0
\]
hence $\left|R_{n}\left(.\right)\right|\rightarrow_{n}0$ in probability.
It remains to show convergence of the first two sums on the rhs in
(\ref{eq:S_n_k}). Therefore we rewrite $\sum_{k}\nabla F_{t_{k}^{n}}\left(\omega_{\wedge t_{k-1}^{n}}^{n}\right)\cdot X_{t_{k-1}^{n},t_{k}^{n}}\left(\omega\right)$
by adding and subtracting $\nabla F_{t_{k-1}^{n}}\left(\omega_{\wedge t_{k-1}^{n}}^{n}\right)$
and see that it is sufficient to prove 
\begin{eqnarray*}
\sum_{k}\nabla F_{t_{k-1}^{n}}\left(\omega_{\wedge t_{k-1}^{n}}^{n}\right)\cdot X_{t_{k-1}^{n},t_{k}^{n}}\left(\omega\right) & \rightarrow_{n\rightarrow\infty}^{\mathbb{P}} & \left(\int_{0}^{t}\nabla F_{-}\cdot dX\right)\left(\omega\right)\\
\sum_{k}\left(\nabla F_{t_{k}^{n}}\left(\omega_{\wedge t_{k-1}^{n}}^{n}\right)-\nabla F_{t_{k-1}^{n}}\left(\omega_{\wedge t_{k-1}^{n}}^{n}\right)\right)\cdot X_{t_{k-1}^{n},t_{k}^{n}}\left(\omega\right) & \rightarrow_{n\rightarrow\infty}^{\mathbb{P}} & 0.
\end{eqnarray*}
For the first sum note that $\nabla F_{t_{k-1}^{n}}\left(\omega_{\wedge t_{k-1}^{n}}^{n}\right)=\nabla F_{t_{k-1}^{n}}\left(\omega^{n}\right)$.
For brevity we introduce the processes $I^{n}$, $\left(t,\omega\right)\mapsto I_{t}^{n}\left(\omega\right):=\nabla F_{t}\left(\omega^{n}\right)$
and $I_{t}\left(\omega\right):=\nabla F_{t}\left(\omega\right)$ and
note that $I_{-}^{n}\rightarrow_{n}I_{-}$ in $\mathbb{P}$-probability
(since $\nabla F$ is regular by assumption). For $\mathbb{P}$-a.e.~$\omega$
we have 
\[
\sum_{k}\nabla F_{t_{k-1}^{n}}\left(\omega^{n}\right)\cdot X_{t_{k-1}^{n},t_{k}^{n}}\left(\omega\right)=\left(\int I_{_{n}r}^{n}dX_{r}\right)\left(\omega\right)=\left(\int I_{_{n}r}dX_{r}\right)\left(\omega\right)+\left(\int\left(I_{_{n}r}^{n}-I_{_{n}r}\right)dX_{r}\right)\left(\omega\right)
\]
(recall the notation $r\in\left(\ensuremath{\prescript{}{n}{r}},\ensuremath{\prescript{n}{}{r}}\right]$).
Note that $\left(I_{_{n}r}\right)_{r\in\mathbb{R}_{+}}$ is a piecewise
constant càglàd process (therefore a simple integrand), hence $\int I_{_{n}r}dX_{r}$
converges ucp to $\int_{0}^{t}I_{-}dX\equiv\int_{0}^{t}\nabla F_{-}dX$;
since $I_{-}^{n}$ converges ucp to $I_{-}\equiv\nabla F_{-}$ we
get that$\int\left(I_{_{n}r}^{n}-I_{_{n}r}\right)dX$ converges ucp
to $0$. It remains to estimate 
\begin{equation}
\sum_{k}\left(\nabla F_{t_{k}^{n}}\left(\omega_{\wedge t_{k-1}^{n}}^{n}\right)-\nabla F_{t_{k-1}^{n}}\left(\omega_{\wedge t_{k-1}^{n}}^{n}\right)\right)\cdot X_{t_{k-1}^{n},t_{k}^{n}}\left(\omega\right).\label{eq:diff}
\end{equation}
Therefore note that $\left(\nabla F_{t_{k}^{n}}\left(._{\wedge t_{k-1}^{n}}^{n}\right)-\nabla F_{t_{k-1}^{n}}\left(._{\wedge t_{k-1}^{n}}^{n}\right)\right)\in\mathcal{F}_{t_{k-1}^{n}}^{\mathbb{P}}$,
hence above sum can again be seen as the stochastic integral of a
simple integrand and causal continuity in time implies 
\[
\sup_{k}\left|\nabla F_{t_{k}^{n}}\left(\omega_{\wedge t_{k-1}^{n}}^{n}\right)-\nabla F_{t_{k-1}^{n}}\left(\omega_{\wedge t_{k-1}^{n}}^{n}\right)\right|\leq\rho_{t,2\left|\omega\right|}\left(mesh\left(\pi_{n}\right)+\sup_{k}\left(\omega_{t_{k-1}^{n}}-\omega_{t_{k-2}^{n}}\right)\right)\rightarrow_{n}0\text{ for \ensuremath{\mathbb{P}}-a.e.\,\ensuremath{\omega}}
\]
which is sufficient to conclude that (\ref{eq:diff}) converges to
$0$ in probability. To sum up, we have shown that the sum $\sum_{k}\nabla F_{t_{k+1}^{n}}\left(\omega_{\wedge t_{k-1}^{n}}^{n}\right)\cdot X_{t_{k-1}^{n},t_{k}^{n}}\left(\omega\right)$
appearing in (\ref{eq:S_n_k}) converges ucp to $\int_{0}^{t}\nabla F_{-}dX$.
The very same arguments imply the convergence 
\[
\frac{1}{2}\sum_{k}Tr\left[X_{t_{k-1}^{n},t_{k}^{n}}^{T}\left(\omega\right)\cdot\Delta F_{t_{k}^{n}}\left(\omega_{\wedge t_{k-1}^{n}}^{n}\right)\cdot X_{t_{k-1}^{n},t_{k}^{n}}\left(\omega\right)\right]\rightarrow_{n\rightarrow\infty}^{\mathbb{P}}\int_{0}^{t}Tr\left[\Delta F_{-}\cdot d\left[X\right]\right].
\]

\textbf{Step 2: }$\sum_{k}T_{k}^{n}\rightarrow_{n}^{\mathbb{P}}\int_{0}^{t}\partial_{0}F_{r}dr$\textbf{.}
Causal time differentiability of $F$ implies that 
\[
\mathbb{R}_{+}\ni\bullet\mapsto F_{t_{k-1}^{n}+\bullet}\left(\omega_{\wedge t_{k-1}^{n}}^{n}\right)
\]
is continuous and has a right-derivative on%
\footnote{Not only at $\bullet=0$ since $F_{\left(t_{k-1}^{n}+r\right)+\bullet}\left(\omega_{\wedge t_{k-1}^{n}}^{n}\right)=F_{\left(t_{k-1}^{n}+r\right)+\bullet}\left(\left(\omega_{\wedge t_{k-1}^{n}}^{n}\right)_{\wedge\left(t_{k-1}^{n}+r\right)}\right)$
$\forall r\in\mathbb{R}_{+}$, hence $F_{t_{k-1}^{n}+r+\bullet}\left(\omega_{\wedge t_{k-1}^{n}}^{n}\right)-F_{t_{k-1}^{n}+r}\left(\omega_{\wedge t_{k-1}^{n}}^{n}\right)=\partial_{0}F_{t_{k-1}^{n}+r}\left(\omega_{\wedge t_{k-1}^{n}}^{n}\right)\bullet+o\left(\bullet\right)$
as $\bullet\searrow0$.%
} $\mathbb{R}_{+}$ which is additionally Riemann integrable. This
is enough for a modification of the fundamental theorem of Riemann
integration to apply (see \cite{botsko1986stronger}): 
\begin{eqnarray*}
F_{t_{k-1}^{n}+\left(t_{k}^{n}-t_{k-1}^{n}\right)}\left(\omega_{\wedge t_{k-1}^{n}}^{n}\right)-F_{t_{k-1}^{n}}\left(\omega_{\wedge t_{k-1}^{n}}^{n}\right) & = & \int_{0}^{t_{k}^{n}-t_{k-1}^{n}}\partial_{0}F_{t_{k-1}^{n}+r}\left(\omega_{\wedge t_{k-1}^{n}}^{n}\right)dr.
\end{eqnarray*}
Taking the sum $\sum_{k}$ over all $k$ s.t.~$\pi\left(n\right)\ni t_{k-1}^{n},t_{k}^{n}\leq t^{n}$
gives 
\[
\sum_{k}T_{k}^{n}=\int_{0}^{t^{n}}\partial_{0}F_{r}\left(\omega_{\wedge r_{n}}^{n}\right)dr.
\]
Since $\partial_{0}F$ is causal, $\partial_{0}F_{r}\left(\omega_{\wedge r_{n}}^{n}\right)=\partial_{0}F_{r}\left(\omega^{n}\right)$
for $r\notin\pi\left(n\right)$ (which is a Lebesgue null-set of $\left[0,t\right]$),
we have shown 
\[
\sum_{k}T_{k}^{n}=\int_{0}^{t^{n}}\partial_{0}F_{r}\left(\omega^{n}\right)dr.
\]
By assumption the integrand converges ucp on $\left(\Omega,\mathcal{F}^{\mathbb{P}},\mathcal{F}_{t}^{\mathbb{P}},\mathbb{P}\right)$
to $\partial_{0}F$, hence we have shown the convergence 
\[
\sum_{k}T_{k}^{n}\rightarrow_{n\rightarrow\infty}^{\mathbb{P}}\int_{0}^{t}\partial_{0}F_{r}dr.
\]
\end{proof}
\begin{rem}
\label{Rem: affine linear}Theorem \ref{thm:pathdependent ito} says
that $F\in C^{1,2}$ implies that $F$ is an It\={o}--process with
regular coefficients. In Section \ref{sec:A-regular-and} we show
(Theorem \ref{thm:smooth stoch integral} and Example \ref{ex:regular ito process})
the converse, i.e.~every process of the form $\int_{0}^{t}\mu_{r}dr+\int_{0}^{t}\sigma_{r-}dX_{r}$
has (provided $\mu,\sigma$ are regular enough) an aggregator in $C^{1,2}$.
Theorem \ref{thm:pathdependent ito} immediately gives the explicit
Bichteler--Dellacherie--Meyer decomposition of $F$ under every $\mathbb{P}\in\mathcal{M}_{c}^{loc}\subset\mathcal{M}_{c}^{semi}$.
\begin{rem}
Dupire showed above formula for the case when $\left(t,\omega\right)\mapsto F\left(t,\omega\right)$
is pathwise continuous in the (pseudo-)metric of Example \ref{ex: continuous proc}
and $\mathbb{P}$ is the Wiener measure. Cont and Fournie \cite{ContFournie,ContFournie_Pathwise}
give a mathematical precise argument and two proofs: a pathwise, analytic
proof in \cite{ContFournie_Pathwise} and a probabilistic proof in
\cite{ContFournie}. Further, both apply to functionals which depend
pathwise continuoulsy on $\omega$ and a second component. 
\begin{rem}
Motivated by the case of no pathdependence, i.e.~$F_{t}\left(\omega\right)=f\left(t\right)$
one could argue that a natural criteria for a fundamental theorem
of calculus is absolute continuity instead of right-differentiability.
In fact, alternatively to Definition \ref{def:time-derivative}, one
could say that $F$ has a causal time derivative if $r\mapsto F_{r}\left(\omega_{\wedge t}\right)$
is absolutely continuous and above proof then applies modulo obvious
modifications. 
\begin{rem}
We restricted attention to continuous semimartingale measures $\mathcal{M}_{c}^{semi}$
(i.e.~$t\mapsto X_{t}$ is a.s.~continuous) but under extra assumptions
on the regularity of $F$ one can also in the case when $X$ is a
càdlàg semimartingale subtract the small jumps before steps 1 and
2 (see also \cite{ContFournie_Pathwise}). However, the assumptions
on $F$ get more technical and in view the applications given in the
sections below we decided to stick to the case $\mathcal{M}_{c}^{semi}$
since our focus lies on the lack of continuity $\omega\mapsto F_{.}\left(\omega\right)$.
In the same spirit one could ask for a Tanaka formula etc.
\end{rem}
\end{rem}
\end{rem}
\end{rem}
The proof of Theorem \ref{thm:pathdependent ito} requires only regularity
of $F$ and its derivatives along one sequence of partitions converging
to identity and with respect to a fixed measure $\mathbb{P}\in\mathcal{M}_{c}^{semi}$,
i.e.~we have shown a more general statement: denote with $C_{w}^{1,2}$
the class of functionals which fulfill Definition \ref{def:C12} when
we replace regular with the condition that for every $\mathbb{P}\in\mathcal{M}_{c}^{semi}$
there exists a sequence of partitions $\pi=\left(\pi\left(n\right)\right)$
converging to identity such that $F$ and its derivatives are weakly-regular
along $\pi$.
\begin{cor}
If $F\in C_{w}^{1,2}$ then $F$ is a continuous semimartingale on
$\left(\Omega,\mathcal{F}^{\mathbb{P}},\mathcal{F}_{t}^{\mathbb{P}},\mathbb{P}\right)$
and 
\[
F_{.}-F_{0}=\int_{0}^{.}\partial_{0}F_{r}dr+\sum_{i=1}^{d}\int_{0}^{.}\partial_{i}F_{r-}dX_{r}^{i}+\frac{1}{2}\sum_{i,j=1}^{d}\int_{0}^{.}\partial_{ij}F_{r}d\left[X^{i},X^{j}\right]_{r}\,\,\,\,\mathbb{P}-a.s.
\]

\end{cor}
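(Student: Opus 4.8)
The plan is to observe that the corollary needs essentially no new argument: the proof of Theorem~\ref{thm:pathdependent ito} already establishes precisely this statement, since every step of it used regularity only along one fixed sequence of partitions converging to identity and with respect to one fixed measure. Accordingly I would fix $\mathbb{P}\in\mathcal{M}_{c}^{semi}$, invoke the hypothesis $F\in C_{w}^{1,2}$ to obtain a single sequence $\pi=(\pi(n))_{n}$ converging to identity along which $F$ together with $\partial_{0}F$, $\nabla F$ and $\Delta F$ are all weakly regular (in the sense of point~(2) of Definition~\ref{def:regular, causal}) with respect to that $\mathbb{P}$, and then re-run the proof of Theorem~\ref{thm:pathdependent ito} verbatim with this particular $\pi$ and this particular $\mathbb{P}$ in place of the arbitrary sequence and arbitrary measure that the statement of that theorem allowed.

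What remains is to audit that proof and confirm that each invocation of ``regular'' is in fact an invocation of weak regularity along $\pi$ for this $\mathbb{P}$. The telescoping identity (\ref{eq:F_t-F_0}) and the split into $S_{k}^{n}$ and $T_{k}^{n}$ are purely pathwise. In Step~1, the Taylor expansion (\ref{eq:spac_dec}) and the vanishing of the remainder $R_{n}$ use only causal continuity in space of $\Delta F$ (a pointwise hypothesis, kept unchanged in $C_{w}^{1,2}$) together with $\sum_{k}|X_{t_{k-1}^{n},t_{k}^{n}}^{i}(\omega)|^{2}\to[X^{i}]_{t}$ in $\mathbb{P}$-probability, which holds for the continuous semimartingale $X$ along \emph{every} sequence converging to identity. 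The convergence of the first sum in (\ref{eq:S_n_k}) to $\int_{0}^{t}\nabla F_{-}\,dX$ rests only on ucp convergence of the piecewise-constant simple integrand built from $\nabla F$ against the semimartingale $X$ and on $I_{-}^{n}\to I_{-}$ ucp with $I_{t}^{n}(\omega):=\nabla F_{t}(\omega^{n})$, the latter being exactly weak regularity of $\nabla F$ along $\pi$; the cross term (\ref{eq:diff}) vanishes by causal continuity in time of $\nabla F$; and the second-order sum is handled identically using weak regularity and causal continuity of $\Delta F$. In Step~2 the only ingredient is $\partial_{0}F_{r}(\omega^{n})\to\partial_{0}F_{r}$ ucp, i.e.\ weak regularity of $\partial_{0}F$ along $\pi$. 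Finally $F$ is still a continuous causal functional --- replacing ``regular'' by ``weakly regular along $\pi$'' in Definition~\ref{def:C12} does not touch the continuity requirement --- so $t\mapsto F_{t}$ is $\mathbb{P}$-a.s.\ continuous and the $t$-dependence of the exceptional null set is removed exactly as at the end of that proof.

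I do not expect any genuine obstacle here; the whole matter is bookkeeping, the key structural point being that the definition of $C_{w}^{1,2}$ supplies weak regularity of $F$ \emph{and} of all three of its derivatives along the \emph{same} sequence $\pi$, which is precisely what each ucp step above consumes. Two minor points deserve a line. First, one should double-check that nowhere in the proof of Theorem~\ref{thm:pathdependent ito} is the freedom to vary the partition sequence actually used --- it is not, a single sequence converging to identity being fixed at the outset. Second, the display in the corollary carries $\partial_{ij}F_{r}$ where Theorem~\ref{thm:pathdependent ito} carries $\partial_{ij}F_{r-}$; since $\partial_{ij}F$ has at most countably many jump times and $[X^{i},X^{j}]=\frac{1}{4}\left([X^{i}+X^{j}]-[X^{i}-X^{j}]\right)$ is a difference of continuous, hence atomless, increasing processes, the two processes $\int_{0}^{\cdot}\partial_{ij}F_{r-}\,d[X^{i},X^{j}]_{r}$ and $\int_{0}^{\cdot}\partial_{ij}F_{r}\,d[X^{i},X^{j}]_{r}$ are $\mathbb{P}$-indistinguishable, so the two formulations are equivalent and nothing more is needed.
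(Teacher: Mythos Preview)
Your proposal is correct and matches the paper's own treatment exactly: the paper does not give a separate proof of the corollary but simply observes, in the paragraph immediately preceding it, that the proof of Theorem~\ref{thm:pathdependent ito} used regularity only along one fixed sequence of partitions and with respect to one fixed $\mathbb{P}$, hence already proves the more general statement. Your careful audit of each step and your remark that $C_{w}^{1,2}$ supplies weak regularity of $F$ and all its derivatives along the \emph{same} sequence are precisely the bookkeeping the paper leaves implicit, and your handling of the $\partial_{ij}F_{r}$ versus $\partial_{ij}F_{r-}$ discrepancy is a correct side observation.
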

Applied with $F_{t}\left(\omega\right)=f\left(t,X_{t}\left(\omega\right)\right)$
all the above reduces to the standard It\={o} formula and motivated
by this one can ask for generalizations of the Feynman--Kac formula
or the classic link between (sub-)harmonic functions and (sub-)martingales.
Indeed, all this was already done by Dupire \cite{Duprire_FIto} and
one can follow the same arguments the only difference being that we
now cover a larger class of aggregators -- though we still have to
convince the reader of this, i.e.~that $C^{1,2}$ includes interesting
functionals besides the functionals of Example \ref{ex: continuous proc}.
Before that we briefly come back to the non-uniqueness of the causal
derivatives seen as processes as pointed out in Remark \ref{rem:indistinguishable}.

\subsection{\label{sub:Uniqueness-of-causal}Uniqueness of causal heat operator
and space derivative acting on processes}

Due to the pathwise nature of the causal derivatives, it may happen
that two different functionals give rise to processes that are indistinguishable,
both are differentiable but these derivatives are no longer indistinguishable
processes. 
\begin{example}
\label{ex:not unique}Let $d=1$. In Section \ref{sec:A-regular-and}
we introduce a causal functional, denoted $\left[X\right]^{BK}:\mathbb{R}_{+}\times\Omega\rightarrow\mathbb{R}$,
s.t.~$\left[X\right]^{BK}\in C^{1,2}$ and under every $\mathbb{P}\in\mathcal{M}_{c}^{semi}$,
$\left[X\right]^{BK}$ is indistinguishable on $\left(\Omega,\mathcal{F}^{\mathbb{P}},\mathcal{F}^{\mathbb{P}},\mathbb{P}\right)$
from the quadratic variation process of $X$ on $\left(\Omega,\mathcal{F}^{\mathbb{P}},\mathcal{F}^{\mathbb{P}},\mathbb{P}\right)$.
Consider the two causal functionals, both are in $C^{1,2}$, 
\[
\left(t,\omega\right)\mapsto\left[X\right]_{t}^{BK}\left(\omega\right)\text{ and }\left(t,\omega\right)\mapsto t
\]
which are indistinguishable processes if $X$ is a Brownian motion
under $\mathbb{P}$ however their causal derivatives are distinguishable
under $\mathbb{P}$ (e.g.~$\partial_{0}\left(t\right)=1$ but $\partial_{0}\left(\left[X\right]^{BK}\right)=0$
$\mathbb{P}$-a.s., cf.~Corollary \ref{cor:BK quadr. variation}).
\end{example}
However, the situation is not too bad: in above example the first
spatial derivative $\nabla$ and $\partial_{0}+\frac{1}{2}Tr\left[\Delta\cdot d\left[X\right]\right]$
applied to these functionals are again indistinguishable processes
on $\left(\Omega,\mathcal{F}^{\mathbb{P}},\mathcal{F}^{\mathbb{P}},\mathbb{P}\right)$.
Theorem \ref{thm:pathdependent ito} gives us the simple explanation
that $\partial_{0}F$ and $\frac{1}{2}\Delta F$ both replicate the
behaviour of $F$ on the time-scale of $t$ as opposed to $\nabla F$
which replicates the behaviour of $F$ on the $\sqrt{t}$-scale, hence
we can only expect uniqueness of the ``causal heat operator'' $\partial_{0}F+\frac{1}{2}Tr\left[\Delta Fd\left[X\right]\right]$
and $\nabla F$. 
\begin{prop}
\label{prop:uniqueness}Let $F,G\in C^{1,2}$, $\mathbb{P}\in\mathcal{M}_{c}^{loc}$
and assume $\left[X\right]_{.}=\int_{0}^{.}\sigma^{2}dr$ under $\mathbb{P}$
for some continuous, $\mathcal{F}_{t}^{\mathbb{P}}$-adapted process
$\sigma$. Further assume $F$ and $G$ are indistinguishable on $\left(\Omega,\mathcal{F}^{\mathbb{P}},\mathcal{F}_{t}^{\mathbb{P}},\mathbb{P}\right)$
i.e. 
\[
F_{.}=G_{.}\,\,\,\mathbb{P}-a.s
\]
Then
\begin{equation}
\partial_{0}F+\frac{1}{2}Tr\left[\Delta F\cdot\sigma^{2}\right]=\partial_{0}G+\frac{1}{2}Tr\left[\Delta G\cdot\sigma^{2}\right]\,\,\,\mathbb{P}-a.s.\label{eq:unique causal}
\end{equation}
and 
\begin{equation}
\int_{0}^{.}\nabla F\cdot dX=\int_{0}^{.}\nabla G\cdot dX\,\,\,\mathbb{P}-a.s.\label{eq:unique spatial}
\end{equation}
\end{prop}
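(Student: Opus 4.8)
The plan is to obtain both identities from the uniqueness of the continuous--semimartingale decomposition, applied to the two It\^o decompositions that Theorem \ref{thm:pathdependent ito} furnishes for $F$ and for $G$ under the given $\mathbb{P}$. First I would invoke Theorem \ref{thm:pathdependent ito} for $F$ and for $G$. Since $[X^{i},X^{j}]_{.}=\int_{0}^{.}(\sigma^{2})_{r}^{ij}dr$ under $\mathbb{P}$ and the regular c\`adl\`ag (or c\`agl\`ad) functionals $\partial_{ij}F,\partial_{ij}G$ agree with their left limits off a Lebesgue--null set of times, the bracket terms become ordinary Lebesgue integrals, so that the decomposition of $F$ reads
\[
F_{.}-F_{0}=\underbrace{\int_{0}^{.}\left(\partial_{0}F_{r}+\frac{1}{2}Tr\left[\Delta F_{r}\cdot\sigma_{r}^{2}\right]\right)dr}_{=:A^{F}}+\underbrace{\sum_{i=1}^{d}\int_{0}^{.}\partial_{i}F_{r-}dX_{r}^{i}}_{=:M^{F}}\,\,\,\,\mathbb{P}-a.s.,
\]
and the analogous formula holds for $G$, yielding $G_{.}-G_{0}=A^{G}+M^{G}$ $\mathbb{P}$-a.s.

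Next I would record the structural properties of these pieces. Here $A^{F}$ is a continuous, $\mathcal{F}_{t}^{\mathbb{P}}$--adapted process of finite variation with $A_{0}^{F}=0$: the coefficients $\partial_{0}F,\Delta F$ are adapted by the definition of $C^{1,2}$ (Definition \ref{def:C12}), $\sigma$ is adapted, and $r\mapsto\partial_{0}F_{r}+\frac{1}{2}Tr[\Delta F_{r}\cdot\sigma_{r}^{2}]$ is locally integrable since the It\^o formula already exhibits $F$ as a semimartingale with this finite--variation part. Because $\mathbb{P}\in\mathcal{M}_{c}^{loc}$, the coordinate process $X$ is a continuous local martingale, and $\nabla F_{-}$ is a locally bounded predictable integrand (a regular c\`agl\`ad/c\`adl\`ag functional is locally bounded and predictable), so $M^{F}$ is a continuous local martingale with $M_{0}^{F}=0$; the same applies to $A^{G},M^{G}$.

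Now, $F$ and $G$ being indistinguishable under $\mathbb{P}$ gives $F_{0}=G_{0}$ and $F_{.}-F_{0}=G_{.}-G_{0}$ $\mathbb{P}$-a.s., hence $M^{F}-M^{G}=A^{G}-A^{F}$ $\mathbb{P}$-a.s. The left-hand side is a continuous local martingale null at $0$, the right-hand side a continuous adapted process of finite variation null at $0$; such a process is indistinguishable from $0$. Therefore $M^{F}=M^{G}$ and $A^{F}=A^{G}$ $\mathbb{P}$-a.s. Equality of the martingale parts is precisely (\ref{eq:unique spatial}) (using once more that, $[X]$ being absolutely continuous, $\int_{0}^{.}\nabla F_{r-}\cdot dX_{r}=\int_{0}^{.}\nabla F\cdot dX$ $\mathbb{P}$-a.s.), and equality of the finite--variation parts gives $\int_{0}^{t}(\partial_{0}F_{r}+\frac{1}{2}Tr[\Delta F_{r}\cdot\sigma_{r}^{2}])dr=\int_{0}^{t}(\partial_{0}G_{r}+\frac{1}{2}Tr[\Delta G_{r}\cdot\sigma_{r}^{2}])dr$ for all $t$, $\mathbb{P}$-a.s., whence Lebesgue differentiation yields (\ref{eq:unique causal}) (understood $\lambda\otimes\mathbb{P}$-a.e.).

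The delicate point is conceptual rather than computational: as Remark \ref{rem:indistinguishable} warns, one may \emph{not} differentiate the identity ``$F=G$'' directly, since $\mathbb{P}$-a.s.\ equality of functionals says nothing about their causal derivatives as processes; what rescues the argument is that exactly the combinations $\int\nabla F\cdot dX$ and the ``causal heat operator'' $\partial_{0}F+\frac{1}{2}Tr[\Delta F\cdot d[X]]$ are the two components of the It\^o decomposition, and that decomposition is $\mathbb{P}$-a.s.\ unique once $\mathbb{P}\in\mathcal{M}_{c}^{loc}$ makes $X$ (hence $M^{F},M^{G}$) a local martingale. Beyond this observation, there is no real obstacle: the remaining work is bookkeeping of the $r$ versus $r-$ evaluations and of where the hypotheses ($\mathbb{P}\in\mathcal{M}_{c}^{loc}$, absolute continuity of $[X]$) enter.
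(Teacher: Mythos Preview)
Your argument is correct and matches the paper's own proof: apply Theorem \ref{thm:pathdependent ito} to $F$ and $G$, use uniqueness of the continuous (special) semimartingale decomposition to conclude $M^{F}=M^{G}$ and $A^{F}=A^{G}$, then Lebesgue-differentiate the latter. The only cosmetic difference is that the paper phrases it via $H=F-G$ and invokes that continuous semimartingales are special, whereas you use directly that $\mathbb{P}\in\mathcal{M}_{c}^{loc}$ makes $M^{F},M^{G}$ local martingales; both routes are equivalent here.
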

\begin{proof}
As in \cite{ContFournie}, this is a simple consequence of the uniqueness
of the semimartingale decomposition: from Theorem \ref{thm:pathdependent ito}
it follows that $H_{t}\left(\omega\right):=F_{t}\left(\omega\right)-G_{t}\left(\omega\right)$
is a continuous semimartingale on $\left(\Omega,\mathcal{F}^{\mathbb{P}},\mathcal{F}_{t}^{\mathbb{P}},\mathbb{P}\right)$
but every continuous semimartingale is a special semimartingale, that
is the decomposition $H=M+A$ into a local martingale $M$ and a process
$A$ of locally bounded variation is unique up to $\mathbb{P}$-indistinguishability
which already implies (\ref{eq:unique spatial}). Since $H$ is indistinguishable
from the process $0$ this implies 
\[
A_{.}=\int_{0}^{.}\left(\partial_{0}H_{r}+\frac{1}{2}Tr\left[\Delta H_{r}\cdot\sigma_{r}^{2}\right]\right)dr
\]
is indistinguishable from $0$, hence we have that $\mathbb{P}$-a.s.~$\partial_{0}H_{t}+\frac{1}{2}Tr\left[\Delta H_{t}\cdot\sigma_{t}^{2}\right]=0$
Lebesgue a.e.~which implies (\ref{eq:unique causal}).\end{proof}
\begin{rem}
Similar path-dependent heat operators appear already in seminal work
of Kusuoka--Stroock \cite{kusuoka1991precise} using Malliavin calculus
techniques on abstract Wiener spaces. However, no Feynman--Kac formula
was given and the abstract Wiener space setting is of course essential
for the results in \cite{kusuoka1991precise}. Further, we mention
the interesting approach of Ahn \cite{0876.60025} on semimartingale
representations via Fréchet derivatives.
\begin{rem}
\label{rem:heat ope}Under stronger assumptions on $\mathbb{P}$ (e.g.~when
$X$ is a non-degenerate Brownian martingale) one can also deduce
the indistinguishability of $\nabla F$ and $\nabla G$ since then
$\int\nabla F\cdot dX=\int\nabla G\cdot dX$ already implies indistinguishability
of $\nabla F$ and $\nabla G$. This is essential to define an extension
of the causal derivatives $\nabla$ and $\partial_{0}+\frac{1}{2}Tr\left[\Delta.d\left[X\right]\right]$
which are defined pathwise to operators on (equivalence classes of)
It\={o}--processes via closure available under a fixed measure as
in Malliavin calculus cf.~\cite[Section 5]{ContFournie} and Remark
\ref{rem:closure1}. We note that in classic work of Kusuoka--Stroock
\cite{kusuoka1991precise} a similar phenomenon appears where the
Fréchet derivative $D$ and only the ``Malliavin heat operator'' $\mathcal{A}=\frac{\partial}{\partial t}+\frac{1}{2}\boldsymbol{T}_{H}D^{2}$
(not only $\frac{\partial}{\partial t}$ or $\frac{1}{2}\boldsymbol{T}_{H}D^{2}$)
are extended by continuity and closure on abstract Wiener spaces (cf.~\cite[p11, Remark 1.17ff]{kusuoka1991precise}).
\end{rem}
\end{rem}

\section{\label{sec:A-regular-and}The it\={o}-integral and quadratic variation
process as causal functionals}

In this section we show that one can construct a pathwise version
of the stochastic integral of a càdlàg process against the coordinate
process $X$ which is an element of $C^{1,2}$, i.e.~a regular, differentiable
causal functional and similarly for the quadratic variation. To achieve
this we make use of the path-wise It\={o}-integral and quadratic variation
as introduced by Bichteler in 1981 using adapted Riemann sums and
subsequent simplifications by Karandikar \cite{bichteler1981stochastic,Karandikar199511}. 
\begin{example}
To appreciate Bichteler's pathwise integral let us briefly recall
why pathwise constructions are non-trivial on the simpler example
of the quadratic variation process in the one-dimensional case ($d=1$).
Once a probability measure $\mathbb{P}\in\mathcal{M}_{c}^{semi}$
and a sequence of partitions $\left(\pi\left(n\right)\right)_{n}$
is fixed, standard results guarantee the existence of a process $\left[X\right]$
on $\left(\Omega,\mathcal{F}^{\mathbb{P}},\mathcal{F}_{t}^{\mathbb{P}},\mathbb{P}\right)$
(for brevity let $d=1$) such that 
\[
X_{0}^{2}+\sum_{k:t_{k-1}^{n}\in\pi\left(n\right)}\left|X_{t_{k}^{n}\wedge t}-X_{t_{k-1}^{n}\wedge t}\right|^{2}\rightarrow_{n}\left[X\right]_{t}\,\,\,\text{uniformly on compacts in \ensuremath{\mathbb{P}}-probability.}
\]
Hence, a candidate for a pathwise version (which is also causally
differentiable) of the quadratic variation process is the map 
\[
\left(t,\omega\right)\mapsto F_{t}\left(\omega\right)=\begin{cases}
\left|\omega\left(0\right)\right|^{2}+\lim_{n\rightarrow\infty}\sum_{k:t_{k-1}^{n}\in\pi\left(n\right)}\left|\omega\left(t_{k}^{n}\wedge t\right)-\omega\left(t_{k-1}^{n}\wedge t\right)\right|^{2} & ,\text{ if this limit exists}\\
0 & ,\text{ otherwise}.
\end{cases}
\]
Indeed, if $\mathbb{P}\in\mathcal{M}_{c}^{semi}$ is such that the
coordinate process $X$ is a Brownian motion and $\left(\pi\left(n\right)\right)_{n}$
are dyadics then above $F$ is a process on $\left(\Omega,\mathcal{F}^{\mathbb{P}},\mathcal{F}_{t}^{\mathbb{P}},\mathbb{P}\right)$
indistinguishable from $\left[X\right]$. However, this is not true
under every $\mathbb{P}\in\mathcal{M}_{c}^{semi}$ or any sequence
of partitions $\left(\pi\left(n\right)\right)_{n}$ converging to
the identity --- in fact, $\sum_{k}\left|\omega\left(t_{k}^{n}\wedge t\right)-\omega\left(t_{k-1}^{n}\wedge t\right)\right|^{2}$
converges in general only in $\mathbb{P}$-probability. (Of course,
for every given $\mathbb{P}\in\mathcal{M}_{c}^{semi}$ one can always
find a fast enough vanishing sub-sequence of $\left(\pi\left(n\right)\right)_{n}$
to ensure convergence $\mathbb{P}$-a.s.) The same problem appears,
afortiori, for the stochastic integral where path-regularity of the
integrand has to be considered as well, etc. 
\end{example}
The main result of this section is Theorem \ref{thm:smooth stoch integral}
below which shows that the causal space derivative acts as the inverse
of a pathwise stochastic integral.
\begin{thm}
\label{thm:smooth stoch integral}Assume $Z$ is a \textup{\emph{real-valued
càdlàg}} process on $\left(\Omega,\mathcal{F}_{t}^{0},\mathcal{F}^{0}\right)$
\textup{\emph{which is càdlàg}} regular. Then for every $i\in\left\{ 1,\ldots,d\right\} $
there exists a regular \textup{\emph{càdlàg}} functional 
\[
J_{Z}^{i}:\mathbb{R}_{+}\times\Omega\rightarrow\mathbb{R}
\]
 such that
\begin{enumerate}
\item $J_{Z}^{i}$ has causal derivatives $\partial_{0}J_{Z}^{i}$,$\nabla J_{Z}^{i}$,$\Delta J_{Z}^{i}$
and these are again regular functionals,
\item $J_{Z}^{i}$ is under every $\mathbb{P}\in\mathcal{M}_{c}^{semi}$
indistinguishable from the It\={o}-integral $\int_{0}^{.}Z_{-}dX^{i}$
on $\left(\Omega,\mathcal{F}^{\mathbb{P}},\mathbb{\mathcal{F}}_{t}^{\mathbb{P}},\mathbb{P}\right)$,
i.e. 
\[
J_{Z}^{i}\left(.,\omega\right)=\left(\int_{0}^{.}Z_{-}dX^{i}\right)\left(\omega\right)\text{ for \ensuremath{\mathbb{P}}-a.e. }\omega.
\]

\end{enumerate}
Moreover $\forall\left(t,\omega\right)\in\mathbb{R}_{+}\times\Omega$,
$t<\zeta\left(\omega\right)$, 
\begin{eqnarray*}
\partial_{0}J_{Z}^{i}\left(t,\omega\right) & = & 0,\\
\nabla J_{Z}^{i}\left(t,\omega\right) & = & \left(0,\ldots0,Z_{t-}\left(\omega\right),0,\ldots0\right)^{T}\text{ (only \ensuremath{i^{th}}component non-zero),}\\
\Delta J_{Z}^{i}\left(t,\omega\right) & = & 0
\end{eqnarray*}
and if $Z_{-}$ is causally continuous in time then $J_{Z}^{i}\in C^{1,2}$.
We also use the notation $\int_{0}^{.}Z_{-}\left(\omega\right)d^{BK}X^{i}\left(\omega\right)$
for the process $J_{Z}^{i}\left(\omega\right)$ ($d^{BK}$ stands
for Bichteler--Karandikar), resp.~in the multidimensional case $Z=\left(Z^{i}\right)_{i=1}^{d}$
we write $\int_{0}^{.}Z_{-}\left(\omega\right)\cdot d^{BK}X\left(\omega\right)$
for $\sum_{i=1}^{d}J_{Z^{i}}^{i}\left(\omega\right)$.
\end{thm}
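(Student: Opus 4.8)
The plan is to realise $J_Z^i$ as a pathwise It\={o} integral in the sense of Bichteler and Karandikar \cite{bichteler1981stochastic,Karandikar199511}, and then to read the causal derivatives off an \emph{exact} formula for its approximating adapted Riemann sums. First I would fix, for each level $n\in\mathbb{N}$, the level--crossing stopping times of $Z$: $\tau^n_0=0$ and $\tau^n_{k+1}(\omega)=\inf\{s>\tau^n_k(\omega):|Z_s(\omega)-Z_{\tau^n_k(\omega)}(\omega)|\ge 2^{-n}\}$; as $Z$ is c\`adl\`ag adapted these are stopping times of the right--continuous (completed) filtration --- which is exactly why the requisite measurability is only available on the spaces $(\Omega,\mathcal{F}^{\mathbb{P}},\mathcal{F}_t^{\mathbb{P}},\mathbb{P})$, cf.\ Remark \ref{rem:no measurability}. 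Put
\[
\Sigma^{i,n}(t,\omega)=\sum_{k\ge 0}Z_{\tau^n_k(\omega)}(\omega)\bigl(X^i_{\tau^n_{k+1}(\omega)\wedge t}(\omega)-X^i_{\tau^n_k(\omega)\wedge t}(\omega)\bigr),
\]
an $(\mathcal{F}_t^{\mathbb{P}})$--adapted, c\`adl\`ag, causal functional (only times $\le t$ contribute, the rest being capped at $t$), and, writing $k_n(\omega)=\max\{k:\tau^n_k(\omega)<t\}$ (finite when $t<\zeta(\omega)$), define
\[
J_Z^i(t,\omega):=\limsup_{n\to\infty}\Sigma^{i,n}(t,\omega)
\]
wherever this is finite, extending it off that residual set --- which is $\mathbb{P}$--negligible for every $\mathbb{P}\in\mathcal{M}_c^{semi}$ --- so as to keep the structural identities below.

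For point (2), fix $\mathbb{P}\in\mathcal{M}_c^{semi}$: then $X$ is a $\mathbb{P}$--semimartingale and $Z_-$ a locally bounded predictable integrand, so $\int_0^\cdot Z_-\,dX^i$ exists, and by the Bichteler--Karandikar theorem $\Sigma^{i,n}\to\int_0^\cdot Z_-\,dX^i$, $\mathbb{P}$--a.s.\ uniformly on compacts; as $X$ is $\mathbb{P}$--a.s.\ continuous the limit has no jumps, so $J_Z^i$ is a continuous $(\mathcal{F}_t^{\mathbb{P}})$--adapted process indistinguishable from the It\={o} integral --- this holds for every $\mathbb{P}\in\mathcal{M}_c^{semi}$, which is the aggregation statement. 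A second elementary fact, used below pathwise: after the last crossing before $t$ the path $Z$ never leaves the closed $2^{-n}$--ball about $Z_{\tau^n_{k_n(\omega)}(\omega)}(\omega)$, and $Z$ has a left limit at $t$, so $|Z_{\tau^n_{k_n(\omega)}(\omega)}(\omega)-Z_{t-}(\omega)|\le 2^{-n}$ and hence $Z_{\tau^n_{k_n(\omega)}(\omega)}(\omega)\to Z_{t-}(\omega)$ for \emph{every} $\omega$ with $t<\zeta(\omega)$.

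The causal derivatives are the core. For $t<\zeta(\omega)$ and $r\in\mathbb{R}^d$ the paths $\omega$ and $\omega^{t,r}$ agree on $[0,t)$, so $\tau^n_k(\omega^{t,r})=\tau^n_k(\omega)$ for $k\le k_n(\omega)$ while every later increment in $\Sigma^{i,n}(t,\omega^{t,r})$ is capped at $t$; a short bookkeeping then yields the \emph{exact} identity $\Sigma^{i,n}(t,\omega^{t,r})=\Sigma^{i,n}(t,\omega)+Z_{\tau^n_{k_n(\omega)}(\omega)}(\omega)\,r^i$. Passing to $\limsup_n$ (and choosing the extension on the residual set to respect it), $r\mapsto J_Z^i(t,\omega^{t,r})=J_Z^i(t,\omega)+Z_{t-}(\omega)\,r^i$ is affine, whence $\nabla J_Z^i(t,\omega)$ is the vector whose $i$-th entry equals $Z_{t-}(\omega)$ and whose other entries vanish, and $\Delta J_Z^i(t,\omega)=0$. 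For the time derivative, $\omega_{\wedge t}$ is constant on $[t,\infty)$, so all increments of $X$ past $t$ vanish and $\Sigma^{i,n}(t+h,\omega_{\wedge t})=\Sigma^{i,n}(t,\omega_{\wedge t})$ for all $h\ge 0$; hence $h\mapsto J_Z^i(t+h,\omega_{\wedge t})$ is constant and $\partial_0 J_Z^i(t,\omega)=0$. These three functionals are again regular ($0$ trivially; the one with only $i$-th component $Z_-$ because $Z$ being a regular c\`adl\`ag functional forces $Z_-$ to be a regular c\`agl\`ad one), $\nabla J_Z^i$ is causally continuous in space trivially (it does not see the perturbing jump at the running time) and is causally continuous in time exactly when $Z_-$ is; together with the continuity of $J_Z^i$ this gives $J_Z^i\in C^{1,2}$ under that hypothesis. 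Finally, regularity of $J_Z^i$ itself --- evaluated along the piecewise constant paths $\omega^{\pi(n)}$ --- reduces, using the affine identity to telescope, to a Riemann--type sum for $\int_0^\cdot Z_-\,dX^i$ whose convergence ucp is the classical approximation theorem for stochastic integrals, once the integrand sample points are controlled by the regularity of $Z$.

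\textbf{Main obstacle.} The real difficulty is the clash emphasised in Remark \ref{rem:indistinguishable}: the causal derivatives are pathwise objects that must take the prescribed values for \emph{all} $\omega$ with $t<\zeta(\omega)$, whereas the Bichteler--Karandikar sums converge only $\mathbb{P}$--a.s., and no single exceptional null set serves the whole non--dominated family $\mathcal{M}_c^{semi}$. The exact identity $\Sigma^{i,n}(t,\omega^{t,r})=\Sigma^{i,n}(t,\omega)+Z_{\tau^n_{k_n(\omega)}(\omega)}(\omega)\,r^i$ is the lever that resolves this: it shows the (possibly pathological) part of $J_Z^i$ on the residual set is left invariant by terminal perturbations, so the entire first--order spatial behaviour is carried by the convergent coefficient $Z_{t-}(\omega)$, and one needs only to fix a convention on that set compatible simultaneously with this terminal--perturbation identity and with the forward--time invariance along stopped paths. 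A secondary, purely technical, burden is that regularity in the sense of Definition \ref{def:regular, causal} must be verified along \emph{every} partition sequence converging to identity and under \emph{every} $\mathbb{P}\in\mathcal{M}_c^{semi}$, which is where one leans on the standard ucp--approximation results for It\={o} integrals together with the hypotheses on $Z$.
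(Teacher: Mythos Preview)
Your proposal is correct and follows essentially the same route as the paper: define $J_Z^i$ via the Bichteler--Karandikar pathwise integral, read off the causal derivatives from the behaviour of the adapted Riemann sums under the perturbations $\omega\mapsto\omega^{t,r}$ and $\omega\mapsto\omega_{\wedge t}$, and deduce regularity from standard ucp approximation of It\^o integrals. The only notable presentational difference is that you establish the exact identity $\Sigma^{i,n}(t,\omega^{t,r})=\Sigma^{i,n}(t,\omega)+Z_{\tau^n_{k_n(\omega)}}(\omega)\,r^i$ at the level of the approximating sums and then pass to the limit, whereas the paper argues directly with the map $I$ at the limit level; and for regularity you telescope via this affine identity, while the paper instead re-applies the Bichteler--Karandikar theorem to the bounded-variation semimartingale $X(\omega^{\pi(n)})$ to identify $J_t(\omega^{\pi(n)})$ with $\int_0^{\cdot}Z_-^n\,dX^n$ and then splits $\int Z_-^n\,dX^n=\int(Z_-^n-Z_-)\,dX^n+\int Z_-\,dX^n$ --- both routes land on the same Riemann sum and the same ucp convergence.
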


\begin{rem}
\label{rem:closure1}In \cite[Section 5]{ContFournie} $\nabla$ is
extended under a fixed measure $\mathbb{P}$ to an operator $\nabla_{X}$
acting on square--integrable (standard) stochastic\emph{ }It\={o}-integrals
which can be identified via the Clark--Hausmann--Ocone Theorem as
the predictable projection of the Malliavin derivative. We refer the
reader to \cite[Section 5]{ContFournie}, but remark that Theorem
\ref{thm:smooth stoch integral} shows that the causal derivative
$\nabla$ acts as the \emph{pathwise} \emph{inverse} to an aggregator
of the It\={o}-integral, e.g.~in the case of a Wiener functional
$\xi\in L^{2}\left(\mathcal{F}_{T}^{\mathbb{P}}\right)$ that has
the representation $\xi=\mathbb{E}_{\mathbb{P}}\left[\xi\right]+\int_{0}^{T}Z_{-}\cdot dX$
with $Z$ as in Theorem \ref{thm:smooth stoch integral} then $\xi$
has a representation as $\xi_{T}^{0}$, $\xi^{0}\in C^{1,2}$, given
as $\xi_{t}^{0}\left(\omega\right):=\mathbb{E}_{\mathbb{P}}\left[\xi^{0}\right]+\int_{0}^{t}\nabla\xi_{-}^{0}\left(\omega\right)\cdot d^{BK}X\left(\omega\right)$;
more interestingly this is not restricted to Wiener functionals provided
of course the existence of such a representation $\xi=m+\int_{0}^{T}Z_{-}\cdot dX$.
The considerable price is that already in the Brownian context one
has to assume some regularity of $Z$ but martingale representation
does not guarantee any such regularity. However it might be a reasonable
assumption for applications (e.g.~implementable Delta-hedging strategies,
integrands of the form $Z_{t}=v\left(t,X_{t},\left[X\right]_{t}^{BK}\right)$
etc; note also that using the non-pathwise $L^{2}$-extension $\nabla_{X}$
leads to not completely trivial issues about consistent approximations
by the pathwise operator $\nabla$ acting on cylinder functions).
\end{rem}
The process $J_{Z}^{i}$ in Theorem \ref{thm:smooth stoch integral}
is constructed via the pathwise Bichteler--Karandikar integral which
we briefly recall.
\begin{thm}[{Bichteler \cite{bichteler1981stochastic},Karandikar \cite[Theorem 2 and 3]{Karandikar199511}}]
\label{thm:karandikar}There exists a map 
\[
I:D\left(\mathbb{R}_{+},\mathbb{R}\right)\times D\left(\mathbb{R}_{+},\mathbb{R}\right)\rightarrow D\left(\mathbb{R}_{+},\mathbb{R}\right)
\]
s.t.~for any filtered probability space $\left(\tilde{\Omega},\tilde{\mathcal{F}},\left(\mathcal{\tilde{F}}_{t}\right),\mathbb{\tilde{P}}\right)$
satisfying the usual conditions, any real-valued semimartingale $\tilde{X}$
and any adapted, real-valued, càdlàg process $\tilde{Z}$ on $\left(\tilde{\Omega},\tilde{\mathcal{F}},\left(\mathcal{\tilde{F}}_{t}\right),\mathbb{\tilde{P}}\right)$
we have 
\[
I\left(\tilde{Z}\left(\omega\right),\tilde{X}\left(\omega\right)\right)\left(.\right)=\left(\int_{0}^{.}\tilde{Z}_{-}d\tilde{X}\right)\left(\omega\right)\text{ \ensuremath{\mathbb{\tilde{P}}}-a.s.}
\]
\end{thm}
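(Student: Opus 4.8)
The plan is to reproduce the construction of Bichteler and Karandikar: to build the map $I$ \emph{pathwise}, as a fixed Borel map on Skorohod space depending only on the two input trajectories and not on any probability measure. For a pair $(g,h)\in D(\mathbb{R}_{+},\mathbb{R})^{2}$ and each $n\in\mathbb{N}$ I would define, by stopping along the oscillations of $g$, the times
\[
\sigma_{0}^{n}(g)=0,\qquad \sigma_{k+1}^{n}(g)=\inf\Bigl\{t>\sigma_{k}^{n}(g):\ \bigl|g(t)-g(\sigma_{k}^{n}(g))\bigr|\vee\bigl|g(t-)-g(\sigma_{k}^{n}(g))\bigr|\ge 2^{-n}\Bigr\},
\]
set $Y^{n}(g,h)(t)=\sum_{k\ge 0}g(\sigma_{k}^{n}(g))\bigl(h(\sigma_{k+1}^{n}(g)\wedge t)-h(\sigma_{k}^{n}(g)\wedge t)\bigr)$, and put $I(g,h):=\lim_{n\to\infty}Y^{n}(g,h)$ when this limit exists uniformly on compacts of $\mathbb{R}_{+}$ and $I(g,h):=0$ otherwise. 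Since $g$ is c\`{a}dl\`{a}g it has only finitely many $2^{-n}$-oscillations on each compact, so $\sigma_{k}^{n}(g)\uparrow\infty$ in $k$ and $Y^{n}(g,h)$ is a well-defined c\`{a}dl\`{a}g path; the first-passage times $\sigma_{k}^{n}$, and hence $Y^{n}$ and $I$, are Borel.

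Next I would reduce the asserted identity to ordinary stochastic integration. On a filtered space satisfying the usual conditions, with $\tilde X$ a semimartingale and $\tilde Z$ an adapted c\`{a}dl\`{a}g process, the $\sigma_{k}^{n}:=\sigma_{k}^{n}(\tilde Z)$ are stopping times and each $\tilde Z_{\sigma_{k}^{n}}$ is $\tilde{\mathcal{F}}_{\sigma_{k}^{n}}$-measurable, so $H^{n}:=\sum_{k\ge 0}\tilde Z_{\sigma_{k}^{n}}\,1_{(\sigma_{k}^{n},\sigma_{k+1}^{n}]}$ is a simple predictable process with $Y^{n}(\tilde Z(\omega),\tilde X(\omega))=(\int_{0}^{\cdot}H^{n}\,d\tilde X)(\omega)$. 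The construction of the $\sigma_{k}^{n}$ yields the \emph{uniform} pathwise bound $\sup_{s>0}|H^{n}_{s}-\tilde Z_{s-}|\le 2^{-n}$: for $s\in(\sigma_{k}^{n},\sigma_{k+1}^{n}]$ one has $|\tilde Z_{u}-\tilde Z_{\sigma_{k}^{n}}|<2^{-n}$ for all $u\in[\sigma_{k}^{n},\sigma_{k+1}^{n})$, hence $|\tilde Z_{s-}-\tilde Z_{\sigma_{k}^{n}}|\le 2^{-n}$. Because $H^{n}\to\tilde Z_{-}$ pointwise and is dominated by the locally bounded predictable process $|\tilde Z_{-}|+1$, the dominated convergence theorem for stochastic integrals gives $\int_{0}^{\cdot}H^{n}\,d\tilde X\to\int_{0}^{\cdot}\tilde Z_{-}\,d\tilde X$ ucp; along a subsequence this is $\tilde{\mathbb{P}}$-a.s.\ uniform convergence on compacts, so $Y^{n}(\tilde Z(\omega),\tilde X(\omega))\to(\int_{0}^{\cdot}\tilde Z_{-}\,d\tilde X)(\omega)$ $\tilde{\mathbb{P}}$-a.s.\ in that sense.

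The genuine work --- Karandikar's refinement of Bichteler's result --- is to promote this to a.s.\ uniform-on-compacts convergence along the \emph{full} sequence $(2^{-n})_{n}$, which is exactly what is needed for $\lim_{n}Y^{n}(\tilde Z(\omega),\tilde X(\omega))$ to exist $\tilde{\mathbb{P}}$-a.s.\ and hence to equal $\int_{0}^{\cdot}\tilde Z_{-}\,d\tilde X$. I would argue by localization and Borel--Cantelli: fixing $T>0$ and localizing along an exhausting sequence of stopping times, one reduces to $\tilde X=\tilde M+\tilde A$ on $[0,T]$ with $\tilde M$ an $L^{2}$-martingale satisfying $\mathbb{E}[[\tilde M]_{T}]<\infty$ and $\tilde A$ of integrable total variation; the $\tilde A$-contribution to $\int_{0}^{\cdot}(H^{n}-\tilde Z_{-})\,d\tilde X$ is then bounded in supremum norm on $[0,T]$ by $2^{-n}\,\mathrm{Var}_{[0,T]}(\tilde A)\to 0$ a.s., and for the $\tilde M$-contribution Doob's maximal inequality and the It\={o} isometry give $\mathbb{E}[\sup_{t\le T}|\int_{0}^{t}(H^{n}-\tilde Z_{-})\,d\tilde M|^{2}]\le 4\cdot 2^{-2n}\,\mathbb{E}[[\tilde M]_{T}]$, which is summable in $n$; Borel--Cantelli then forces $\sup_{t\le T}|\int_{0}^{t}(H^{n}-\tilde Z_{-})\,d\tilde X|\to 0$ $\tilde{\mathbb{P}}$-a.s., and undoing the localization and letting $T\to\infty$ finishes. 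I expect this last step --- upgrading the cheap ucp (equivalently, subsequential a.s.) convergence to convergence that is simultaneously uniform on compacts, $\tilde{\mathbb{P}}$-a.s., and along the full sequence --- to be the main obstacle; the secondary, bookkeeping-type point is to check that each $\sigma_{k}^{n}$ is a genuine stopping time depending measurably on the path, so that $I$ is Borel and $H^{n}$ is truly a simple predictable integrand. Both are carried out in \cite{bichteler1981stochastic,Karandikar199511}.
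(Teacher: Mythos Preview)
The paper does not prove this theorem at all: it is quoted from \cite{bichteler1981stochastic,Karandikar199511}, and the only ``proof material'' in the paper is Remark~\ref{rem:bichteler}, which records the construction of $I$ (the stopping times $\tau_i^n$, the Riemann sums $I^n$, and $I:=\lim_n I^n$ on the set where this uniform-on-compacts limit exists). Your proposal reproduces exactly this construction and then supplies the argument the paper defers to the references --- the uniform bound $|H^n-\tilde Z_-|\le 2^{-n}$, localization to an $L^2$-martingale plus an integrable-variation process, and Borel--Cantelli via Doob/It\^o isometry to pass from ucp to a.s.\ convergence along the full sequence --- which is precisely Karandikar's proof. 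Two cosmetic discrepancies with the paper's stated construction: your $\sigma_k^n$ also tests $|g(t-)-g(\sigma_k^n)|$ (harmless, and arguably cleaner for the stopping-time property), and the paper's $I^n$ carries an initial term $z(0)x(0)$ that your $Y^n$ omits; neither affects the argument or the conclusion.
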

\begin{rem}
The set of full $\mathbb{\tilde{P}}$-measure on which the equality
holds depends of course on the processes $\tilde{Z}$ and $\tilde{X}$. 
\begin{rem}
Another well-known result on pathwise integration is Föllmer's It\={o}-integral
``sans probabilités'' \cite{follmer-81}. However, this allows only
to give a pathwise version when the integrand is of the type $\int\nabla f\left(X_{s}\right)\cdot dX_{s}$,
i.e.~a non-pathdependent integrand which is in gradient form (the
latter is a strong restriction in multi-dimensions $d>1$, cf.~Example
\ref{ex: Levy-area} for a process which is not of this type). The
non-pathdependence can be relaxed, see \cite{ContFournie_Pathwise},
however the gradient form is essential for Föllmer's argument. Let
us also mention the approach via capacity as in Denis and Martini
\cite{denis2006theoretical} to construct an integral for quasi-sure
integrands and Peng's approach via PDEs \cite{2007arXiv0711.2834P}
which lead to a much less explicit integral but on the other hand
apply to less regular integrands than the Bichteler---Karandikar integral.
\begin{rem}
\label{rem:bichteler}The map $I$ is constructed as follows: for
$x,z\in D\left(\mathbb{R}_{+},\mathbb{R}\right)$ and $n\in\mathbb{N}$
define the sequence $\left(\tau_{i}^{n}\right)_{i\geq0}\subset\mathbb{R}_{+}\cup\left\{ +\infty\right\} $
recursively as%
\footnote{With the convention $\inf\emptyset=\infty$ and $\tau_{i+1}^{n}=\infty$
if $\tau_{i}^{n}=\infty$.%
} 
\[
\tau_{i}^{n}=\inf\left\{ t>\tau_{i-1}^{n}:\left|z_{t}-z_{\tau_{i-1}^{n}}\right|\geq2^{-n},t\in\mathbb{R}_{+}\right\} \text{ and }\tau_{0}^{n}\equiv0.
\]
Define $I^{n}:D\left(\mathbb{R}_{+},\mathbb{R}\right)\times D\left(\mathbb{R}_{+},\mathbb{R}\right)\rightarrow D\left(\mathbb{R}_{+},\mathbb{R}\right)$
as 
\begin{equation}
I^{n}\left(z,x\right)\left(t\right)=z\left(0\right)x\left(0\right)+\sum_{i\geq0}z\left(\tau_{i}^{n}\right)\left(x\left(\tau_{i+1}^{n}\wedge t\right)-x\left(\tau_{i}^{n}\wedge t\right)\right)\label{eq:BK_finite sum}
\end{equation}
and set $I\left(z,x\right)=\lim_{n}I^{n}\left(z,x\right)$ whenever
this limit exists in the topology of uniform convergence on compact
sets on $D\left(\mathbb{R}_{+},\mathbb{R}\right)$, otherwise set
$I\left(z,x\right)=0$.
\end{rem}
\end{rem}
\end{rem}

\begin{proof}[Proof of Theorem \ref{thm:smooth stoch integral}]
We define 
\begin{equation}
J_{Z}^{i}\left(t,\omega\right):=\begin{cases}
I\left(Z\left(\omega\right),X^{i}\left(\omega\right)\right)_{t} & ,\text{if}\, t<\zeta\left(\omega\right)\\
0 & \text{,else}
\end{cases}\label{eq:def J}
\end{equation}
where $I\left(.,.\right):D\left(\mathbb{R}_{+},\mathbb{R}\right)\times D\left(\mathbb{R}_{+},\mathbb{R}\right)\rightarrow D\left(\mathbb{R}_{+},\mathbb{R}\right)$
is the map constructed in Theorem \ref{thm:karandikar} and Remark
\ref{rem:bichteler}. For brevity write $J$ instead of $J_{Z}^{i}$
throughout this proof. Point $\left(2\right)$ follows immediately
from Theorem \ref{thm:karandikar}. 

To see the causal differentiability, consider $\left(t,\omega\right)\in\mathbb{R}_{+}\times\Omega$
with $t<\zeta\left(\omega\right)$. In this case, if the adapted Riemann
sum (\ref{eq:BK_finite sum}) of the Bichteler--Karandikar integral
$I\left(Z\left(\omega\right),X{}^{i}\left(\omega\right)\right)\left(t\right)$
converges as a uniform limit then this is also the case for $\omega^{t,r}$
or $\omega_{\wedge t}$ for any $r=\left(r^{1},\ldots,r^{d}\right)\in\mathbb{R}^{d}$:
first note that $Z$ is $\mathcal{O}^{0}$-optional (since $\mathcal{O}^{0}$
is generated by the càdlàg processes) and so by Proposition \ref{prop:canonical pathspace}
the trajectories of $Z\left(\omega\right)$ and $Z\left(\omega_{\wedge t}\right)$
coincide on $\left[0,t\right]$, hence the sequence $\left(\tau_{i}^{n}\left(\omega\right)\right)_{i\geq0}$
is the same as $\left(\tau_{i}^{n}\left(\omega_{\wedge t}\right)\right)_{i\geq0}$
when restricted to $\left[0,t\right]$. Then directly from the definition
of the Bichteler--Karandikar integral as limit of (\ref{eq:BK_finite sum})
it follows that 
\[
J\left(t+\epsilon,\omega_{\wedge t}\right)=I\left(Z\left(\omega_{\wedge t}\right),X^{i}\left(\omega_{\wedge t}\right)\right)_{t+\epsilon}=I\left(Z\left(\omega_{\wedge t}\right),X^{i}\left(\omega\right)\right)_{t}=I\left(Z\left(\omega\right),X^{i}\left(\omega\right)\right)_{t},
\]
which implies $\partial_{0}J\left(t,\omega\right)=0$. Similarly,
we can use that the trajectories of $Z\left(\omega^{t,r}\right)$
and $Z\left(\omega\right)$ coincide on $\left[0,t\right)$ to see
that 
\begin{eqnarray*}
J\left(t,\omega^{t,r}\right) & = & I\left(Z\left(\omega^{t,r}\right),X^{i}\left(\omega^{t,r}\right)\right)_{t}=I\left(Z\left(\omega^{t,r}\right),X^{i}\left(\omega\right)\right)_{t}+Z_{t-}\left(\omega^{t,r}\right)r^{i}\\
 & = & I\left(Z\left(\omega\right),X^{i}\left(\omega\right)\right)_{t}+Z_{t-}\left(\omega\right)r^{i}
\end{eqnarray*}
which shows the claimed first space derivative. By $\mathcal{O}^{0}$-measurability
(resp.~causality) of $Z$ 
\[
Z_{t-}\left(\omega^{t,\Delta}\right)=\lim_{r\nearrow t}Z_{r}\left(\omega^{t,\Delta}\right)=\lim_{r\nearrow t}Z_{r}\left(\left(\omega^{t,\Delta}\right)_{\wedge r}\right)=Z_{t-}\left(\omega^{t,-\Delta_{t}\omega}\right)
\]
and similarly $Z_{t-}\left(\omega\right)=Z_{t-}\left(\omega^{t,-\Delta_{t}\omega}\right)$
which implies $\Delta J_{t}\left(\omega\right)=0$. 

To see that $J\in C^{1,2}$ we first show that $J$ is a continuous,
regular functional, more precisely that $\forall\mathbb{P}\in\mathcal{M}_{c}^{semi}$
the map $\left(t,\omega\right)\mapsto J_{t}\left(\omega^{n}\right)$
is an adapted càdlàg process that converges ucp to the ($\mathbb{P}$-a.s.)
continuous process $\left(t,\omega\right)\mapsto J_{t}\left(\omega\right)$
on $\left(\Omega,\mathcal{F}^{\mathbb{P}},\mathcal{F}_{t}^{\mathbb{P}},\mathbb{P}\right)$.
Therefore fix $\mathbb{P}\in\mathcal{M}_{c}^{semi}$, any sequence
of partitions $\left(\pi\left(n\right)\right)_{n}$ converging to
the identity and denote the associated piecewise constant approximation
of $\omega$ with $\left(\omega^{n}\right)_{n}$. From the very definition
of $J$ we have $\forall\left(t,\omega\right)\in\mathbb{R}_{+}\times\Omega$
\[
J_{t}^{n}\left(\omega\right):=J_{t}\left(\omega^{n}\right)=I\left(Z\left(\omega^{n}\right),X^{i}\left(\omega^{n}\right)\right)\left(t\right).
\]
Now, 
\[
\left(t,\omega\right)\mapsto Z_{t}^{n}\left(\omega\right):=Z_{t}\left(\omega^{n}\right)\text{ and }\left(t,\omega\right)\mapsto X_{t}^{n}\left(\omega\right):=X_{t}^{i}\left(\omega^{n}\right)
\]
are càdlàg processes on $\left(\Omega,\mathcal{F}^{\mathbb{P}},\mathcal{F}_{t}^{\mathbb{P}},\mathbb{P}\right)$
and $X^{n}$ is of bounded variation (on compacts) and therefore a
semimartingale, hence Theorem \ref{thm:karandikar} guarantees that
\[
J_{.}^{n}\left(\omega\right)=\left(\int_{0}^{.}Z_{-}^{n}dX^{n}\right)\left(\omega\right)\,\,\text{for }\mathbb{P}-a.e.\,\omega.
\]
so especially $J^{n}$ is a càdlàg process. To see that $J_{-}^{n}\rightarrow J_{-}$
ucp note that for every $n$ we have \noun{$\mathbb{P}$-}a.s. 
\begin{eqnarray*}
\int_{0}^{.}Z_{-}^{n}dX^{n} & = & \int_{0}^{.}\left(Z_{-}^{n}-Z_{-}\right)dX^{n}+\int_{0}^{.}Z_{-}dX^{n}\\
 & = & \sum_{k:t_{k}^{n}\leq\cdot}\left(Z_{-}^{n}-Z_{-}\right)_{t_{k-1}^{n}}X_{t_{k-1}^{n},t_{k}^{n}}+\sum_{k:t_{k}^{n}\leq\cdot}Z_{t_{k-1}^{n}-}X_{t_{k-1}^{n},t_{k}^{n}}\\
 & = & \int_{0}^{.}C^{n}dX+\sum_{k:t_{k}^{n}\leq\cdot}Z_{t_{k-1}^{n}-}X_{t_{k-1}^{n},t_{k}^{n}}
\end{eqnarray*}
with $C_{t}^{n}=\sum_{\pi\left(n\right)}\left(Z_{t_{k}^{n}-}^{n}-Z_{t_{k}^{n}-}\right)1_{\left[t_{k}^{n}\wedge.,t_{k+1}^{n}\wedge.\right)}$
(the second equality holds since $X^{n}$ is of bounded variation
on compacts, hence the stochastic integrals are indistinguishable
from pathwise Lebesgue integrals, cf.~\cite[Theorem 18]{MR2020294}).
Since $Z$ is a regular functional, $C^{n}\rightarrow0$ ucp and the
first integral converges to $0$ ucp as $n\rightarrow\infty$. Standard
results show ucp convergence of the Riemann sums to $\int_{0}^{.}Z_{-}dX$.
\end{proof}
As an immediate but important corollary we get the existence of a
$C^{1,2}$ version of the quadratic variation $\left[X\right]$ of
$X$. 
\begin{cor}
\label{cor:BK quadr. variation}For every $i,j\in\left\{ 1,\ldots,d\right\} $
there exists a causal functional 
\[
B^{ij}:\mathbb{R}_{+}\times\Omega\rightarrow\mathbb{R}
\]
 such that 
\begin{enumerate}
\item $B^{ij}\in C^{1,2}$,
\item $B^{ij}$ is indistinguishable from $\left[X^{i},X^{j}\right]$ on
$\left(\Omega,\mathcal{F}^{\mathbb{P}},\mathcal{F}_{t}^{\mathbb{P}},\mathbb{P}\right)$
for every \emph{$\mathbb{P}\in\mathcal{M}_{c}^{semi}$}, i.e.~
\[
B_{.}^{ij}=\left[X^{i},X^{j}\right]_{.}\text{ \ensuremath{\mathbb{P}} -a.s.}
\]
 
\end{enumerate}
Moreover $\forall\left(t,\omega\right)\in\mathbb{R}_{+}\times\Omega$,
$t<\zeta\left(\omega\right)$, $i,j\in\left\{ 1,\ldots,d\right\} $
\begin{eqnarray*}
\partial_{0}B_{t}^{ij}\left(\omega\right) & = & 0
\end{eqnarray*}
and 
\begin{eqnarray*}
\nabla B_{t}^{ij}\left(\omega\right) & = & \left(\Delta_{t}X^{j}\left(\omega\right)1_{k=i}+\Delta_{t}X^{i}\left(\omega\right)1_{k=j}\right)_{k=1,\ldots,d}\\
\Delta B_{t}^{ij}\left(\omega\right) & = & \left(1_{l=j}1_{k=i}+1_{l=i}1_{k=j}\right)_{k,l=1,\ldots d}
\end{eqnarray*}
We also use the notation $\left[X^{i},X^{j}\right]^{BK}$ for $B^{ij}$.\end{cor}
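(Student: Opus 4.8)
The plan is to obtain $B^{ij}$ from the It\={o} integration-by-parts formula together with Theorem \ref{thm:smooth stoch integral}. Concretely, I would set
\[
B^{ij}_t(\omega):=X^i_t(\omega)X^j_t(\omega)-X^i_0(\omega)X^j_0(\omega)-J^j_{X^i}(t,\omega)-J^i_{X^j}(t,\omega),
\]
where $J^j_{X^i}$ denotes the functional supplied by Theorem \ref{thm:smooth stoch integral} with integrator $X^j$ and integrand $Z:=X^i$, and symmetrically $J^i_{X^j}$ has integrator $X^i$ and integrand $Z:=X^j$. To invoke that theorem I first check its hypotheses for $Z=X^i$: the coordinate $X^i$ is a c\`adl\`ag process on $(\Omega,\mathcal{F}^0_t,\mathcal{F}^0)$, and it is a regular c\`adl\`ag functional because every $\mathbb{P}\in\mathcal{M}_c^{semi}$ charges only continuous paths, along which the piecewise constant approximations $\omega^{\pi(n)}$ converge to $\omega$ uniformly on compacts, so that $X^i(\omega^{\pi(n)})_-\to X^i_-$ ucp for every partition sequence converging to identity. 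Moreover $X^i_-$ is causally continuous in time with modulus $\rho_{t,R}(x)=x$, since for $\Delta\in C([0,t],\mathbb{R}_+)$ one has $X^i_{(r+\Delta_r)-}(\omega_{\wedge r})=X^i_r(\omega)$ if $\Delta_r>0$ and $=X^i_{r-}(\omega)$ if $\Delta_r=0$, whence $\sup_{r\in[0,t]}|X^i_{(r+\Delta_r)-}(\omega_{\wedge r})-X^i_{r-}(\omega)|\le\sup_{r\in[0,t]}|\omega_r-\omega_{r-}|$. Thus Theorem \ref{thm:smooth stoch integral} applies and yields $J^j_{X^i},J^i_{X^j}\in C^{1,2}$.

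Next I would check $B^{ij}\in C^{1,2}$. The map $(t,\omega)\mapsto X^i_t(\omega)X^j_t(\omega)=f(X_t(\omega))$ with $f(x)=x^ix^j\in C^{1,2}(\mathbb{R}_+\times\mathbb{R}^d)$ lies in $C^{1,2}$ by the remark following Definition \ref{def:C12}; the map $(t,\omega)\mapsto X^i_0(\omega)X^j_0(\omega)$ is causal, constant in $t$, unchanged under $\omega\mapsto\omega^{\pi(n)}$, hence a regular continuous functional all of whose causal derivatives vanish; and $J^j_{X^i},J^i_{X^j}\in C^{1,2}$ by the previous paragraph. Since causal differentiation is linear, ucp convergence is stable under finite sums, and a finite sum of moduli of continuity is again a modulus, $C^{1,2}$ is a vector space, and therefore $B^{ij}\in C^{1,2}$.

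For point (2) I would fix $\mathbb{P}\in\mathcal{M}_c^{semi}$, so that $X$ is a continuous semimartingale under $\mathbb{P}$. The integration-by-parts formula gives, $\mathbb{P}$-almost surely,
\[
X^i_tX^j_t=X^i_0X^j_0+\int_0^tX^i_{s-}\,dX^j_s+\int_0^tX^j_{s-}\,dX^i_s+[X^i,X^j]_t.
\]
By part (2) of Theorem \ref{thm:smooth stoch integral}, $J^j_{X^i}(.,\omega)=\left(\int_0^.X^i_-\,dX^j\right)(\omega)$ for $\mathbb{P}$-a.e. $\omega$, and symmetrically for $J^i_{X^j}$; substituting into the definition of $B^{ij}$ gives $B^{ij}_.=[X^i,X^j]_.$ $\mathbb{P}$-a.s.

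Finally the explicit causal derivatives follow by linearity from formulas already established. The constant term $X^i_0X^j_0$ contributes nothing; $\partial_0 J^j_{X^i}=\Delta J^j_{X^i}=0$ and $\nabla J^j_{X^i}(t,\omega)=(X^i_{t-}(\omega)1_{k=j})_{k=1,\ldots,d}$ (and symmetrically for $J^i_{X^j}$) by Theorem \ref{thm:smooth stoch integral}; while for $f(x)=x^ix^j$ one computes $\partial_0 f(X_t)=0$, $\nabla f(X_t(\omega))=(X^j_t(\omega)1_{k=i}+X^i_t(\omega)1_{k=j})_{k=1,\ldots,d}$ and Hessian $\left(1_{k=i}1_{l=j}+1_{k=j}1_{l=i}\right)_{k,l=1,\ldots,d}$. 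Adding, $\partial_0 B^{ij}=0$,
\[
\nabla B^{ij}_t(\omega)=\left((X^j_t-X^j_{t-})(\omega)1_{k=i}+(X^i_t-X^i_{t-})(\omega)1_{k=j}\right)_{k=1,\ldots,d}=\left(\Delta_tX^j(\omega)1_{k=i}+\Delta_tX^i(\omega)1_{k=j}\right)_{k=1,\ldots,d},
\]
and $\Delta B^{ij}_t(\omega)=\left(1_{l=j}1_{k=i}+1_{l=i}1_{k=j}\right)_{k,l=1,\ldots,d}$, as claimed. The only real obstacle here is bookkeeping: establishing that $C^{1,2}$ is stable under the relevant finite linear combinations, and carefully tracking the left limit $X^i_{t-}$ (rather than $X^i_t$) produced by the stochastic integral term, which is precisely the origin of the jump term $\Delta_tX$ in $\nabla B^{ij}$.
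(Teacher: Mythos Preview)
Your proof is correct and follows essentially the same route as the paper: define $B^{ij}$ via the integration-by-parts identity using the Bichteler--Karandikar integrals $J^j_{X^i}$ and $J^i_{X^j}$ from Theorem \ref{thm:smooth stoch integral}, then read off the causal derivatives term by term. The only point you gloss over that the paper handles explicitly is that the coordinate process $X^i$ is not literally real-valued (it may hit the cemetery $\zeta$), so the paper replaces it by $\overline{X}^i_t(\omega):=X^i_t(\omega)1_{\zeta(\omega)=\infty}$ before invoking Theorem \ref{thm:smooth stoch integral}; this is a cosmetic fix since $\zeta=\infty$ $\mathbb{P}$-a.s.\ for every $\mathbb{P}\in\mathcal{M}_c^{semi}$, and your argument goes through once this substitution is made.
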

\begin{proof}
Since $X^{i}$ is the coordinate process it fulfills the assumptions
for the integrand in Theorem \ref{thm:smooth stoch integral} except
that $X^{i}$ is not real-valued (only $\mathbb{P}$-a.s.). Therefore
set $\overline{X}_{t}\left(\omega\right)=X_{t}\left(\omega\right)1_{\zeta\left(\omega\right)=\infty}$.
Hence, Theorem \ref{thm:smooth stoch integral} allows us to the define
\begin{equation}
B_{t}^{ij}\left(\omega\right)=\overline{X}_{t}^{i}\left(\omega\right)\overline{X}_{t}^{j}\left(\omega\right)-\int_{0}^{t}\overline{X}_{-}^{i}\left(\omega\right)d^{BK}X^{j}\left(\omega\right)-\int_{0}^{t}\overline{X}_{-}^{j}\left(\omega\right)d^{BK}X^{i}\left(\omega\right)\label{eq:b_ij}
\end{equation}
Points $\left(1\right)\&\left(2\right)$ follow from Theorem \ref{thm:karandikar},
Theorem \ref{thm:smooth stoch integral} and the identity 
\[
\left[X^{i},X^{j}\right]=X^{i}X^{j}-\int_{0}X^{i}dX^{j}-\int_{0}X^{j}dX^{i}.
\]
Finally, apply the causal derivatives to $B^{ij}\left(t,\omega\right)$
which gives for the time derivative 
\begin{eqnarray*}
\partial_{0}\left(X^{i}X^{j}\right)_{t}\left(\omega\right) & = & \partial_{0}\left(\int_{0}^{.}X^{i}\left(\omega\right)d^{BK}X^{j}\left(\omega\right)\right)_{t}=\partial_{0}\left(\int_{0}^{.}X^{j}\left(\omega\right)d^{BK}X^{i}\left(\omega\right)\right)_{t}=0.
\end{eqnarray*}
and from Theorem \ref{thm:smooth stoch integral} the space derivatives
of the integrals follows as well 
\[
\partial_{i}B_{t}^{ij}\left(\omega\right)=\overline{X}_{t}^{j}\left(\omega\right)-\overline{X}_{t-}^{j}\left(\omega\right)=\Delta_{t}\overline{X}^{j}\left(\omega\right).
\]
\end{proof}
\begin{rem}
The first spatial derivative of $B^{ij}$ is indistinguishable under
every continuous semimartingale measure from $0$ but the second spatial
derivative is not! This is a direct consequence of the pathwise nature
of the causal derivatives as pointed out in Remark \ref{rem:indistinguishable}
(pathwise the first derivative is not equal to $0$).
\begin{rem}
It would be desirable to have a result which guarantees the existence
of a $C^{1,2}$-aggregator, i.e.~given a reasonably nice subset $\mathcal{P}$
of $\mathcal{M}_{c}^{semi}$ and a family of processes 
\[
\left\{ Z^{\mathbb{P}}:\mathbb{P\in}\mathcal{P},Z^{\mathbb{P}}\text{ is a progr.\,\ measurable process on }\left(\Omega,\mathcal{F}^{\mathbb{P}},\mathcal{F}_{t}^{\mathbb{P}},\mathbb{P}\right)\right\} 
\]
which fulfill some consistency condition then there exists a causal
functional $Z\in C^{1,2}$ s.t.~$Z=Z^{\mathbb{P}}$ $\mathbb{P}$-a.s.~$\forall\mathbb{P}\in\mathcal{P}$.
Indeed, the only result we are aware of in this direction is the main
result in \cite[Theorem 5.1]{EJP2011-67} which shows for an important
subset $\mathcal{P}$ of $\mathcal{M}_{c}^{semi}$ the existence of
an aggregator. However, even with the restriction to $\mathcal{P}$
it is not clear which conditions would guarantee that the aggregator
is causally differentiable which is why we have choosen a constructive
approach for the two examples of stochastic integration and quadratic
variation which also applies to all elements of $\mathcal{M}_{c}^{semi}$.
\end{rem}
\end{rem}
We give some applications of Theorem \ref{thm:smooth stoch integral}
and Corollary \ref{cor:BK quadr. variation}: the first is an easy
application of the chain and product rule.
\begin{example}
\label{ex: exponential}Let $d=1$ and $E_{t}\left(\omega\right)=\exp\left(X_{t}\left(\omega\right)-\frac{1}{2}\left[X\right]_{t}^{BK}\left(\omega\right)\right)$.
Then $\partial_{0}E_{t}\left(\omega\right)=0$,$\nabla E_{t}\left(\omega\right)=E_{t}\left(\omega\right)\left(1-\Delta_{t}X\left(\omega\right)\right)$
and $\Delta E_{t}\left(\omega\right)=E_{t}\left(\omega\right)\Delta_{t}X\left(\omega\right)\left(-2+\Delta_{t}X\left(\omega\right)\right)$.
Hence, Theorem \ref{thm:pathdependent ito} reduces to the classic
identity 
\[
E_{.}=1+\int_{0}^{.}E_{r}dX\text{ on }\left(\Omega,\mathcal{F}^{\mathbb{P}},\mathcal{F}_{t}^{\mathbb{P}},\mathbb{P}\right),\,\,\mathbb{P}\in\mathcal{M}_{c}^{semi}.
\]

\end{example}
The path-dependent It\={o}-formula, Theorem \ref{thm:pathdependent ito},
shows that every element of $C^{1,2}$ is a It\={o}-process $\mathcal{M}_{c}^{semi}$-quasi-sure.
Using Theorem \ref{thm:karandikar} we can now go the other direction:
\begin{example}
\label{ex:regular ito process}Assume $\mu,\sigma:\mathbb{R}\times\Omega\rightarrow\mathbb{R}$
fulfill the same assumptions as $Z$ in Theorem \ref{thm:smooth stoch integral}\noun{.
T}hen the It\={o}-process 
\[
\int_{0}^{.}\mu_{r}dr+\int_{0}^{.}\sigma_{r-}\cdot dX_{r}=\int_{0}^{.}\mu_{r}dr+\sum_{i=1}^{d}\left(\int_{0}^{.}\sigma_{r-}^{i}dX_{r}^{i}\right)
\]
is indistinguishable on $\left(\Omega,\mathcal{F}^{\mathbb{P}},\mathcal{F}_{t}^{\mathbb{P}},\mathbb{P}\right)$,\noun{
$\mathbb{P}\in\mathcal{M}_{c}^{semi}$, }from $F\in C^{1,2}$ defined
as 
\[
F_{t}\left(\omega\right):=\int_{0}^{t}\mu_{r}\left(\omega\right)dr+\sum_{i=1}^{d}\int_{0}^{t}\sigma_{r-}^{i}\left(\omega\right)d^{BK}X_{r}^{i}\left(\omega\right)
\]
and $\forall\left(t,\omega\right)\in\mathbb{R}_{+}\times\Omega$,$t<\zeta\left(\omega\right)$,
\[
\partial_{0}F_{t}\left(\omega\right)=\mu_{t}\left(\omega\right),\nabla F_{t}\left(\omega\right)=\sigma_{t-}\left(\omega\right)=\left(\sigma_{t-}^{d}\left(\omega\right),\ldots,\sigma_{t-}^{d}\left(\omega\right)\right)^{T},\Delta F_{t}\left(\omega\right)=0.
\]
(hence $\partial_{0}F=\mu$, $\nabla F=\sigma_{-}$ $\mathbb{P}$-a.s.)
which captures the intuition that the causal time derivative measures
the infinitesimal drift and the causal space derivative measures sensitivity
to instantaneous changes of the underlying process $X$, cf.~Remark
\ref{Rem: affine linear}. 
\end{example}
Even if we are only interested in a fixed measure $\mathbb{P}\in\mathcal{M}_{c}^{semi}$,
Theorem \ref{thm:pathdependent ito} and the results of this section
are a non-trivial extension of the functional It\={o}-formula: while
not the topic of this article, we finish with an application of Corollary
\ref{cor:BK quadr. variation} to mathematical finance which allows
to compare it with the approach of Cont--Fournie \cite{ContFournie,ContFournie_Pathwise}
who treat the same example (see \cite[Section 5]{Fourniethesis} for
more applications in finance relying on such a generalized functional
It\={o}-formula).
\begin{example}
For fixed $\mathbb{P}\in\mathcal{M}_{c}^{semi}$ the coordinate process
$X=\left(X^{i}\right)_{i=1}^{d}$ is interpreted as the discounted
price process of $d$ assets in a security market (for brevity we
let $d=1$ and assume no interest rates). Assume there exists an equivalent
martingale measure $\mathbb{Q}\in\mathcal{M}_{c}^{loc}$,$\mathbb{Q}\sim\mathbb{P}$
and $\left[X\right]=\int\sigma_{r}^{2}dr$ under $\mathbb{Q}$ for
some adapted, continuous, bounded process $\sigma^{2}$. The usual
Black--Scholes--Merton dynamic replication argument translates and
shows that if $F\in C^{1,2}$ and 
\[
\partial_{0}F+\frac{\sigma^{2}}{2}\Delta F=0,\, F_{T}=\pi\,\,\mathbb{P}-a.s.
\]
then $F_{t}$ is an arbitrage free price for the claim $\pi$ at time
$t\in\left[0,T\right]$ (i.e.~a version of $\mathbb{E}_{\mathbb{Q}}\left[\pi\lvert\mathcal{F}_{t}^{\mathbb{P}}\right]$)%
\footnote{If $\sigma_{t}^{2}=\sigma_{BS}^{2}X_{t}^{2}$, $\sigma_{BS}^{2}\in\mathbb{R}$
fix and $\pi=\left(X_{T}-K\right)^{+}$ one makes the Ansatz $F_{t}=f\left(t,X_{t}\right)$
and this reduces to the classic Black--Scholes--Merton PDE. The usual
\emph{Greeks} $\partial_{0}F,\nabla F,\Delta F$ are now functionals
and it is interesting to recall Proposition \ref{prop:uniqueness}
and Remark \ref{rem:heat ope} which show that as a process, only
$\partial_{0}F+\frac{\sigma^{2}}{2}\Delta F$ is uniquely defined,
i.e.~in a financial context it makes sense to think of this non-uniqueness
as the Gamma--Theta or convexity--time-decay tradeoff.%
}. Applied to variance swaps we can make the Ansatz $\pi=\left[X\right]_{T}^{BK}-k$
and $F_{t}\left(\omega\right)=\left[X\right]_{t}^{BK}\left(\omega\right)-k+v\left(t,X_{t}\left(\omega\right)\right)$
and above reduces to the parabolic SPDE
\[
\frac{dv}{dt}+\frac{\sigma_{t}^{2}}{2}\frac{d^{2}v}{dx^{2}}=-\sigma_{t}^{2},\, v\left(T,X_{T}\right)=0\,\,\,\mathbb{P}-a.s
\]
(for a local vol.~model $\sigma_{t}^{2}=\sigma_{0}^{2}\left(t,X_{t}\right)X_{t}^{2}$
this further reduces to the backward heat equation with source term
$x^{2}\sigma_{0}\left(t,x\right)$). Of course, more interesting applications
exist for more complex payoffs (nonlinear options on variance, etc.)
in combination with the uncertainty $\mathbb{P}\in\mathcal{P}$ for
$\mathcal{P}\subset\mathcal{M}_{c}^{semi}$ but the related pathdependent
PDEs and questions of uniqueness, existence, interplay with the nonlinear
expectation $\mathcal{E}\left(.\right)=\inf_{\mathbb{P}\in\mathcal{P}}\mathbb{E}_{\mathbb{P}}\left[.\right]$
etc.~touch the tip of an iceberg which is currently under heavy development
even for the case when $\pi$ is a continuous functional in uniform
norm of the underlying --- we draw attention to work of Peng--Wang
\cite{Peng2011,2011arXiv1108.4317P} and work of Ekren--Keller--Touzi--Zhang
\cite{2011arXiv1109.5971E,2012arXiv1210.0006E,2012arXiv1210.0007E}.\end{example}
\begin{acknowledgement}
HO is grateful for support from the European Research Council under
the European Union's Seventh Framework Programme (FP7/2007-2013)/ERC
grant agreement nr.~258237 and DFG Grant SPP-1324. Further, the author
would like to thank Rama Cont for helpful remarks.

\end{acknowledgement}

\bibliographystyle{plain}
\bibliography{/home/hd/Dropbox/projects/BibteX/roughpaths}

\end{document}